\documentclass[psamsfonts]{amsart}

\usepackage{amssymb,amsfonts,amscd}
\usepackage[all,arc]{xy}
\usepackage{enumerate}
\usepackage{mathrsfs}

\newtheorem{thm}{Theorem}[section]
\newtheorem{cor}[thm]{Corollary}
\newtheorem{prop}[thm]{Proposition}
\newtheorem{lem}[thm]{Lemma}
\newtheorem{conj}[thm]{Conjecture}

\newtheorem*{thm1}{Theorem A}

\theoremstyle{definition}
\newtheorem{defn}[thm]{Definition}

\newtheorem{exmp}[thm]{Example}

\theoremstyle{remark}
\newtheorem{rem}[thm]{Remark}

\makeatletter
\let\c@equation\c@thm
\makeatother
\numberwithin{equation}{section}

\bibliographystyle{plain}

\title[]{On the escape rate of geodesic loops in an open manifold with nonnegative Ricci curvature}

\author[]{Jiayin Pan}
\thanks{}

\newcommand{\Addresses}{{ 
		\bigskip
		\footnotesize
		
		Jiayin Pan, \textsc{Department of Mathematics, University of California\ -\ Santa Barbara, CA, USA.}\par\nopagebreak
		\textit{E-mail address}: \texttt{jypan10@gmail.com}

}}


\begin{document}
	
	\begin{abstract}
	A consequence of the Cheeger-Gromoll splitting theorem states that for any open manifold $(M,x)$ of nonnegative Ricci curvature, if all the minimal geodesic loops at $x$ that represent elements of $\pi_1(M,x)$ are contained in a bounded ball, then $\pi_1(M,x)$ is virtually abelian. We generalize the above result: if these minimal representing geodesic loops of $\pi_1(M,x)$ escape from any bounded metric balls at a sublinear rate with respect to their lengths, then $\pi_1(M,x)$ is virtually abelian. 
	\end{abstract}
	
\maketitle

One of the most basic theorems in Riemannian manifolds with nonnegative Ricci curvature is the Cheeger-Gromoll splitting theorem \cite{CG_split}. The splitting theorem has an important application on the structure of fundamental groups: for any closed manifold of $\mathrm{Ric}\ge 0$, its fundamental group must be virtually abelian, that is, $\pi_1(M)$ contains an abelian subgroup of finite index \cite{CG_split}. This beautiful result has an analog for open manifolds (non-compact and complete), which we describe below. Let $M$ be an open manifold of $\mathrm{Ric}\ge 0$ and let $x\in M$. For each element $\gamma\in\pi_1(M,x)$, among all the loops based at $x$ that represent $\gamma$, we can find one with the minimal length. This loop, denoted as $c_\gamma$, is a geodesic loop based at $x$. By applying the Cheeger-Gromoll splitting theorem, it can be shown that if there are $x\in M$ and $R>0$ such that $c_\gamma$ is contained in $B_R(x)$, the metric ball of radius $R$ and center $x$, for all $\gamma\in \pi_1(M,x)$, then $\pi_1(M)$ is virtually abelian (we include a proof of this statement in Appendix \ref{app_bdd} for readers' convenience). Note that when $M$ has nonnegative sectional curvature, then by the soul theorem and the Sharafutdinov retraction \cite{CG_soul,Sha}, it is not difficult to see that this condition always holds by setting $x$ in a soul. However, this result is quite limited: it cannot be applied to positive Ricci curvature. In fact, if $M$ has $\mathrm{Ric}>0$ and an infinite fundamental group, then $c_\gamma$ will escape from any bounded balls as $\gamma$ exhausts $\pi_1(M,x)$ \cite{SW}.

We introduce a quantity that measures how fast these minimal representing geodesic loops of $\Gamma=\pi_1(M,x)$ escape from bounded balls centered at $x$. We define the escape rate of $(M,x)$ by comparing the radius of the smallest closed ball centered at $x$ and containing $c_\gamma$, to the length of $c_\gamma$. Recall that since $\Gamma$ acts isometrically on the Riemannian universal cover $(\widetilde{M},\tilde{x})$ as covering transformations, for any $\gamma\in \Gamma$, the length of $c_\gamma$ equals $d(\tilde{x},\gamma\tilde{x})$, written as $|\gamma|$ when the base point $x$ is clear. Note that if $\Gamma$ is infinite, then $\sup_{\gamma\in \Gamma} |\gamma|=\infty$.

\begin{defn}
	Let $(M,x)$ be an open manifold with an infinite fundamental group. We define the \textit{escape rate} of $(M,x)$, a scaling invariant, as
	$$E(M,x)=\limsup_{|\gamma|\to \infty} \dfrac{d_H(x,c_\gamma)}{|\gamma|},$$
	where $\gamma\in\pi_1(M,x)$ and $d_H$ is the Hausdorff distance.
\end{defn} 

From the definition, it is clear that $E(M,x)\le 1/2$. For an open manifold as a warped product $[0,\infty)\times_f S^{p-1}\times_h N$, where $N$ is a closed manifold with an infinite fundamental group, we can estimate its escape rate and see that $E(M,x)$ is determined by the decay rate of the warping function $h$ (see Appendix \ref{app_exmp}).

When $E(M,x)=0$, the minimal representing geodesic loops of $\pi_1(M,x)$ either escape from bounded balls at a sub-linear rate with respect to their lengths, or they are all contained in a bounded ball.
It can be shown that whether $E(M,x)$ is zero does not depend on the choice of $x\in M$ (see Corollary \ref{cor_zero}). 

We state the main theorem of this paper.

\begin{thm1}\label{main}
	Let $(M,x)$ be an open $n$-manifold of $\mathrm{Ric}\ge 0$. If $E(M,x)=0$, then $\pi_1(M,x)$ is virtually abelian.
\end{thm1}	

The converse of Theorem A is not true (see Appendix \ref{app_exmp}). 

In general, an open manifold of $\mathrm{Ric}\ge 0$ may not have a virtually abelian fundamental group. Wei showed that any finitely generated torsion-free nilpotent group can be realized as the fundamental group of an open manifold of positive Ricci curvature \cite{Wei}. Based on Wei's construction, Wilking generalized this result to any finitely generated nilpotent group \cite{Wilk}.

Regarding the general structure of fundamental groups of open manifolds with $\mathrm{Ric}\ge 0$, Milnor showed that any finitely generated subgroup of $\pi_1(M,x)$ has polynomial growth \cite{Mil}. Combined with Gromov's work \cite{Gro_poly}, such a subgroup must be virtually nilpotent. See \cite{KW} for the index bound on the nilpotent subgroup. We mention that it is not difficult to show that $E(M,x)<1/2$ implies the finite generation of $\pi_1(M,x)$ (see Lemma \ref{fg}).

The contrapositive of Theorem A gives a geometric characterization of open manifolds with $\mathrm{Ric}\ge 0$ and non-virtually-abelian fundamental groups: if $\pi_1(M,x)$ contains a torsion-free nilpotent non-abelian subgroup, then $E(M,x)>0$, that is, there is a sequence $\gamma_i\in \pi_1(M,x)$ such that $|\gamma_i|\to\infty$ and $d_H(x,c_{\gamma_i})$ is proportional to the length of $c_{\gamma_i}$ (also see Conjecture \ref{quest_gap}). This also explains why we cannot use certain warping functions to construct an open manifold as a warped product with $\mathrm{Ric}\ge 0$ and a torsion-free nilpotent non-abelian fundamental group (see Appendix \ref{app_exmp}).

In \cite[Corollary 4.2]{Pan_al_stable}, it was shown that if $\widetilde{M}$ has certain geometric stability condition at infinity, (for example, $\widetilde{M}$ has a unique asymptotic cone as a metric cone) then $\pi_1(M)$ is virtually abelian. Based on \cite{Pan_eu,Pan_al_stable} and the results in this paper, we can show that these manifolds satisfy the condition $E(M,x)=0$ (see Corollary \ref{cor_cone_zero}).

We briefly explain the strategy to prove Theorem A. The principle is, when viewing $(\widetilde{M},\tilde{x},\Gamma)$ from afar, we cannot distinguish the bounded case ($\sup_{\gamma\in\Gamma} d_H(x,c_\gamma)$ is finite) and the sublinear case ($E(M,x)=0$). To illustrate this approach rigorously, we study the asymptotic geometry of $(\widetilde{M},\tilde{x},\Gamma)$ via equivariant Gromov-Hausdorff convergence \cite{FY}:
$$(r_i^{-1}\widetilde{M},\tilde{x},\Gamma)\overset{GH}\longrightarrow (Y,y,G),$$
where $r_i\to\infty$ and $G$ acts isometrically and effectively on $Y$. The limit space $(Y,y,G)$ above is called an \textit{equivariant asymptotic cone} of $(\widetilde{M},\Gamma)$. We prove the following theorem on the relations between $E(M,x)$ and any equivariant asymptotic cone of $(\widetilde{M},\Gamma)$.

\begin{thm}\label{thm_equivalent}
	Let $(M,x)$ be an open manifold of $\mathrm{Ric}\ge 0$. Then the following statements are equivalent:\\
	(1) $E(M,x)=0$;\\
	(2) the orbit $\Gamma\cdot\tilde{x}$ is weakly asymptotically geodesic;\\
	(3) for any equivariant asymptotic cone $(Y,y,G)$ of $(\widetilde{M},\Gamma)$, the orbit $G\cdot y$ is geodesic in $Y$;\\
	(4) for any equivariant asymptotic cone $(Y,y,G)$ of $(\widetilde{M},\Gamma)$, the orbit $G\cdot y$ is geodesic in $Y$ and is isometric to a standard Euclidean space.
\end{thm}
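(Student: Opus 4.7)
The plan is to establish the cycle (4) $\Rightarrow$ (3), (1) $\Leftrightarrow$ (2), (2) $\Leftrightarrow$ (3), and (3) $\Rightarrow$ (4). The first implication is immediate, since a Euclidean orbit is geodesic. For (1) $\Leftrightarrow$ (2) I would observe that lifting $c_\gamma$ to a minimizing segment $\tilde{c}_\gamma$ from $\tilde{x}$ to $\gamma\tilde{x}$ in $\widetilde{M}$ and using deck transformations yields
\[
d_H(x,c_\gamma)=\sup_{p\in\tilde{c}_\gamma}d(p,\Gamma\cdot\tilde{x}).
\]
Hence $E(M,x)=0$ is exactly the statement that as $|\gamma|\to\infty$, every minimizing segment between two orbit points stays $o(|\gamma|)$-close in sup distance to the orbit, which is precisely what it should mean for $\Gamma\cdot\tilde{x}$ to be weakly asymptotically geodesic.

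For (2) $\Leftrightarrow$ (3) I would use equivariant Gromov--Hausdorff convergence. Assuming (2), fix an equivariant asymptotic cone $(Y,y,G)$ at scales $r_i\to\infty$, pick $g_1y,g_2y\in G\cdot y$, and approximate them by orbit points $r_i^{-1}\gamma_i^{(j)}\tilde{x}$. The rescaled and translated minimizing segments from $\gamma_i^{(1)}\tilde{x}$ to $\gamma_i^{(2)}\tilde{x}$ have bounded length and subconverge to a geodesic $\sigma$ in $Y$ from $g_1y$ to $g_2y$; the $o(|\gamma|)$-closeness from (2) passes to the limit to give $\sigma\subset G\cdot y$, so the orbit is geodesic. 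Conversely, if (2) fails, there are $c>0$ and a sequence $\gamma_i$ with $|\gamma_i|\to\infty$ and $\sup_{p\in\tilde{c}_{\gamma_i}}d(p,\Gamma\tilde{x})\ge c|\gamma_i|$; rescaling by $r_i=|\gamma_i|$ produces an equivariant asymptotic cone in which the limit unit geodesic $\sigma$ from $y$ to some $gy$ carries a point at distance $\ge c$ from $G\cdot y$. But (3) supplies a competing unit geodesic $\tau\subset G\cdot y$ from $y$ to $gy$, and the non-branching character of Ricci-limit asymptotic cones forces $\sigma=\tau$, a contradiction.

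Finally, (3) $\Rightarrow$ (4) combines the homogeneity of the orbit with the Cheeger--Colding splitting theorem. Under (3), any two orbit points are joined by a geodesic lying in $G\cdot y$; translating such a segment by the $G$-element sending one endpoint to the other, and iterating in both directions, extends it to a bi-infinite line in $Y$ contained in $G\cdot y$. Since $Y$ is an asymptotic cone of a manifold with $\mathrm{Ric}\ge 0$, the splitting theorem gives $Y=\mathbb{R}\times Y'$ with this line as the $\mathbb{R}$-factor. Any isometry of $Y$ preserves the canonical Euclidean de Rham factor, so the $G$-action is compatible with the splitting, and the image of $G\cdot y$ in $Y'$ is again a geodesic homogeneous orbit on which one can iterate. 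Since the Hausdorff dimension of $G\cdot y$ is bounded by $\dim M$, the procedure halts after finitely many steps, yielding $G\cdot y\cong\mathbb{R}^k$ for some $k$. The main obstacle will be this inductive splitting step—checking that at each stage the $G$-action descends compatibly to the residual factor, that the projected orbit remains geodesic and homogeneous, and that the iteration terminates—together with the non-branching input needed to close (3) $\Rightarrow$ (2).
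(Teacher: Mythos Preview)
Your proposal has two genuine gaps, one in (3)$\Rightarrow$(4) and one in (3)$\Rightarrow$(2).

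\textbf{The missing compact factor in (3)$\Rightarrow$(4).} Your inductive splitting only establishes that $G\cdot y$ is isometric to $\mathbb{R}^k\times Z$ with $Z$ \emph{compact}: once the residual factor contains no line, the procedure halts, but nothing in your argument forces $Z$ to be a point. Dimension bounds merely control $k$; they do not exclude, say, a round circle sitting as $Z$. Ruling out a nontrivial compact $Z$ is precisely the heart of the paper's proof. The paper first proves exactly your splitting (Lemma~\ref{orbit_eu_cpt}), and then eliminates $Z$ by a critical rescaling argument (Proposition~\ref{orbit_eu}) that works not within a single cone but across the entire compact connected family $\Omega(\widetilde{M},\Gamma)$ of equivariant asymptotic cones: blowing $(Y,y,G)$ up makes the orbit split off $\mathbb{R}^{k+1}$, blowing it down makes the orbit $\mathbb{R}^k$, and one finds an intermediate scale at which neither alternative is consistent with hypothesis (3). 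This global argument over all cones is the essential missing idea.

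\textbf{The non-branching step in (3)$\Rightarrow$(2).} Your contradiction produces two unit geodesics $\sigma$ and $\tau$ from $y$ to $gy$, with $\tau\subset G\cdot y$ and $\sigma$ leaving the orbit, and you invoke non-branching to force $\sigma=\tau$. This fails for two reasons. First, asymptotic cones of $\widetilde{M}$ are in general \emph{collapsed} Ricci limits, and non-branching is not known in that setting. Second, even in a non-branching space, two distinct minimal geodesics between the same endpoints can coexist; non-branching only forbids geodesics that share an initial segment from diverging. The paper avoids this entirely by proving the cycle (1)$\Rightarrow$(2)$\Rightarrow$(3)$\Rightarrow$(4)$\Rightarrow$(1): in (4)$\Rightarrow$(1) the orbit is already known to be a Euclidean $\mathbb{R}^k$, so the minimal geodesic from $y$ to $gy$ is \emph{unique} and lies in the orbit, which immediately contradicts the limit geodesic $\sigma$ that strays into $\pi^{-1}(B_{1/3}(z))^c$.

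A smaller point: the paper's Definition~\ref{def_weak_asy_geo} of ``weakly asymptotically geodesic'' is a word-decomposition condition on $(\Gamma,\rho)$ (there is $s(\epsilon,R)$ with $R/s(\epsilon,R)\to\infty$ such that every $\gamma$ of length $R$ factors as a product of pieces of length $\le s(\epsilon,R)$ with total length $\le(1+\epsilon)R$), not the sup-distance condition you wrote. The two are closely related, and your (2)$\Rightarrow$(3) is in the right spirit, but the implication (1)$\Rightarrow$(2) in the paper (Proposition~\ref{asy_geo}) is a genuine construction using the function $D(R)=\max_{\gamma\in\Gamma(R)}d_H(x,c_\gamma)$, not a tautology.
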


The notion \textit{weakly asymptotically geodesic} in (2) will be introduced later in Section \ref{sec_geod}. This extends and is inspired by the notion \textit{asymptotically geodesic} (see Section \ref{sec_pre}.2).

The technical part of Theorem \ref{thm_equivalent} is to prove (3)$\Rightarrow$(4). Two ingredients are crucial in this part. The first one is the Cheeger-Colding splitting theorem \cite{CC1}, which is a substantial generalization of the Cheeger-Gromoll splitting theorem to the case of Ricci limit spaces. With this splitting theorem, we can reduce the orbit $G\cdot y$ to a metric product $\mathbb{R}^k\times Z$, where $Z$ is compact (see Lemma \ref{orbit_eu_cpt}). The second one is a critical rescaling argument, which was first developed in \cite{Pan_eu} by the author, to rule out the compact factor $Z$ in $G\cdot y$. The geometric intuition behind the argument here is quite different from the one in \cite{Pan_eu} (see Remark \ref{rem_crit_scal} for explanations).

To deduce Theorem A from Theorem \ref{thm_equivalent}, we further consider a nilpotent subgroup $N$ of $\Gamma$ of finite index. Note that the existence of such a subgroup is guaranteed by \cite{Mil,Gro_poly}. We consider the convergence
$$(r_i^{-1}\widetilde{M},\tilde{x},\Gamma,N)\overset{GH}\longrightarrow (Y,y,G,H).$$
Combined with Theorem \ref{thm_equivalent}, we show that the limit group $H$ acts as translations in the Euclidean factor of $Y$ (see Corollary \ref{nil_trans}). Back to the geometry of $(\widetilde{M},\tilde{x},\Gamma)$, this implies that any element $\gamma \in N$ with large displacement at $\tilde{x}$ acts almost as a translation at $\tilde{x}$, that is,
$$d(\gamma^2\tilde{x},\tilde{x})\ge 1.9\cdot  d(\gamma\tilde{x},\tilde{x}).$$
Applying an argument in \cite[Section 4]{Pan_al_stable}, the virtually abelian structure follows.

We organize the paper as follows. In Section \ref{sec_pre}, we go through some preliminaries and background. We prove Theorem \ref{thm_equivalent} (1)$\Rightarrow$(2)$\Rightarrow$(3) in Section \ref{sec_geod}. Then we finish the proof of Theorem \ref{thm_equivalent} and Theorem A in Section \ref{sec_eu}. Appendix \ref{app_bdd} includes a proof of the bounded case by the Cheeger-Gromoll splitting theorem. In Appendix \ref{app_exmp}, we estimate the escape rates of some known and new examples of open manifolds with positive Ricci curvature.

\tableofcontents

Acknowledgment: The author is partially supported by AMS Simons travel fund. The author would like to thank Guofang Wei for many helpful discussions.

\section{Preliminaries}\label{sec_pre}

\noindent\textbf{1.1 Gromov-Hausdorff convergence and asymptotic cones}

Let $(M,x)$ be an open manifold of $\mathrm{Ric}\ge 0$. For any sequence $r_i\to\infty$, passing to a subsequence if necessary, we obtain the following pointed Gromov-Hausdorff convergence \cite{Gro_book}:
$$(r_i^{-1}M,x)\overset{GH}\longrightarrow (Z,z).$$
We call the above $(Z,z)$ an \textit{asymptotic cone} of $M$, or a tangent cone of $M$ at infinity. The limit space $(Z,z)$ in general is not unique; in other words, it may depend on the scaling sequence $r_i$ \cite{CC2}. $(Z,z)$ does not depend on the base point $x$.

Cheeger and Colding \cite{CC1} proved a splitting theorem for Ricci limit spaces that comes from a sequence of complete manifolds with $\mathrm{Ric}\ge -\epsilon_i\to 0$, which substantially generalizes the Cheeger-Gromoll splitting theorem. In the context of asymptotic cones of open manifolds with $\mathrm{Ric}\ge 0$, we obtain the following result.

\begin{thm}\cite{CC1}
	Let $(M,x)$ be an open manifold of nonnegative Ricci curvature and let $(Z,z)$ be an asymptotic cone of $(M,x)$. Suppose that $Z$ contains a line, then $Z$ splits isometrically as $\mathbb{R}\times Z'$, where $Z'$ is a length metric space.
\end{thm}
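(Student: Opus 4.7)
The plan is to view $(Z,z)$ as a pointed Gromov-Hausdorff limit of the rescaled manifolds $(M_i,x_i) := (r_i^{-1}M, x)$. Rescaling the metric by the positive constant $r_i^{-1}$ multiplies the Ricci tensor by $r_i^2$, so each $M_i$ still has $\mathrm{Ric}\ge 0$. Thus the statement is a special case of the Cheeger-Colding almost-splitting phenomenon for a sequence of smooth manifolds with nonnegative Ricci, and the task is to upgrade the presence of a single line in the limit to a global isometric splitting.

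The first step is to lift the line $\sigma:\mathbb{R}\to Z$ to almost-lines in the approximating sequence. For each large $R$, pick $p^R = \sigma(R)$ and $q^R = \sigma(-R)$ and transport them through the Gromov-Hausdorff approximations to points $p_i^R, q_i^R \in M_i$ satisfying $d(p_i^R, x_i) \to R$, $d(x_i, q_i^R) \to R$ and $d(p_i^R, q_i^R) \to 2R$ as $i \to \infty$. In particular, the Abresch-Gromoll excess $e_R^i(y) := d(y, p_i^R) + d(y, q_i^R) - d(p_i^R, q_i^R)$ is uniformly small on any fixed ball around $x_i$, once $i$ and $R$ are large in the right order.

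The second step, which I expect to be the main technical obstacle, is the quantitative almost-splitting estimate on each $M_i$. Consider the one-sided Busemann-type functions $b_i^+(y) := d(y, p_i^R) - R$ and $b_i^-(y) := -d(y, q_i^R) + R$. By Laplacian comparison under $\mathrm{Ric}\ge 0$ they are sub- and super-harmonic in the barrier sense, and by the Abresch-Gromoll inequality $b_i^+ - b_i^-$ is small in $C^0$ on a fixed ball. Replace $b_i^+$ by its harmonic extension $h_i$ on a ball of fixed radius, and feed this into the Bochner formula together with a Cheeger-Colding segment inequality. This yields integral estimates of the form $\fint |\operatorname{Hess} h_i|^2 \to 0$ and $\fint (|\nabla h_i| - 1)^2 \to 0$ on any bounded region, in the limit $i, R \to \infty$. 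Extracting this sharp $L^2$ Hessian control from only a Ricci lower bound is the delicate part of the argument.

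Finally, pass to the limit. The functions $h_i$ subconverge uniformly on bounded sets to a $1$-Lipschitz function $b: Z \to \mathbb{R}$ with $|\nabla b| \equiv 1$ almost everywhere and vanishing weak Hessian, and $b$ is linear along almost every limit geodesic. Standard consequences from the Cheeger-Colding framework then promote $b$ to an isometric $\mathbb{R}$-action on $Z$ via its gradient flow, producing a global isometry $Z \cong \mathbb{R} \times Z'$ under which $b$ is projection to the first factor, where $Z' := b^{-1}(0)$ inherits the intrinsic length metric from $Z$. This is the asserted splitting.
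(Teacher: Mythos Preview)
The paper does not give its own proof of this theorem: it is stated as a citation of Cheeger--Colding \cite{CC1}, with no argument supplied beyond the sentence preceding the statement. So there is nothing in the paper to compare your proposal against.

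That said, your sketch is a faithful outline of the Cheeger--Colding strategy. A couple of minor remarks. First, the Ricci tensor as a $(0,2)$-tensor is invariant under homothety, so the rescaled manifolds $(r_i^{-1}M,x)$ simply retain $\mathrm{Ric}\ge 0$; your phrasing ``multiplies the Ricci tensor by $r_i^2$'' is correct only if you mean the curvature lower bound viewed as a scalar, which is zero here anyway. Second, in the final step the passage from the $L^2$ Hessian bound to a genuine isometric splitting of the limit is not done via a ``gradient flow of $b$'' in any classical sense on the singular space $Z$; rather, Cheeger--Colding build the splitting map on the smooth approximants and pass the product structure itself to the limit (equivalently, one shows the level sets of $b$ are all isometric and the map $Z\to \mathbb{R}\times b^{-1}(0)$ is an isometry). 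Your description elides this, but the overall architecture is right.
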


To study the asymptotic geometry of $\pi_1(M,x)$-action on the Riemannian universal cover $(\widetilde{M},\tilde{x})$, we use the equivariant Gromov-Hausdorff convergence, which was introduced by Fukaya and Yamaguchi \cite{FY}. Letting $r_i\to\infty$, we can always obtain a convergent subsequence as below.
\begin{center}
	$\begin{CD}
	(r^{-1}_i\widetilde{M},\tilde{x},\Gamma) @>GH>> 
	(Y,y,G)\\
	@VV\pi V @VV\pi V\\
	(r^{-1}_iM,x) @>GH>> (Z,z).
	\end{CD}$
\end{center}
We call $(Y,y,G)$ an \textit{equivariant asymptotic cone} of $(\widetilde{M},\Gamma)$. The limit group $G$ is a Lie group \cite{CC3,CN} that acts isometrically and effectively on $Y$. $Z$ is isometric to the quotient space $Y/G$ \cite{FY}.\\

\noindent\textbf{1.2 Asymptotically geodesic metrics}

Part of the work in this paper is motivated by the following concept in geometric group theory (see \cite[(34)]{Pansu}):

\begin{defn}\label{def_asy_geo}
	Let $\Gamma$ be a finitely generated group with a left-invariant metric $\rho$. We say that $(\Gamma,\rho)$ is \textit{asymptotically geodesic}, if for any $\epsilon>0$, there is $s=s(\epsilon)>0$ such that for any $\gamma\in \Gamma$, we can find a word $\prod_{j=1}^N \gamma_j=\gamma$ satisfying
	$$\sum_{j=1}^N\rho(e,\gamma_j)\le (1+\epsilon)\rho(e,\gamma)$$
	and $\rho(e,\gamma_j)\le s$ for all $j$.
\end{defn}

For instance, any word metric on $\Gamma$ is asymptotically geodesic by choosing $s(\epsilon)=1$ for all $\epsilon>0$.

For a Riemannian manifold $(M,x)$, since $\Gamma=\pi_1(M,x)$ acts on the Riemannian universal cover $\widetilde{M}$ freely and isometrically, we can define a natural left-invariant metric on $\Gamma$ from its orbit at $\tilde{x}$, that is,
$$\rho(\gamma_1,\gamma_2)=d(\gamma_1\tilde{x},\gamma_2\tilde{x}).$$
It is well-known that when $M$ is closed, $\rho$ is asymptotically geodesic and is bi-Lipschitz equivalent to any word metric on $\Gamma$ (see \cite[(34)]{Pansu}).

When $\Gamma$ has polynomial growth, Gromov proved that for any sequence $r_i\to\infty$, the rescaled sequence $(r_i^{-1}\Gamma,\rho,e)$ is precompact in the pointed Gromov-Hausdorff topology \cite{Gro_poly}. Consequently, $(\Gamma,\rho)$ has asymptotic cones as length metric spaces. See \cite{Pansu} for the detailed descriptions of these asymptotic cones.

For an open manifold $(M,x)$, if $\sup_{\gamma\in \Gamma} d_H(x,c_\gamma)$ is finite, then it is not difficult to see that the corresponding $(\Gamma,\rho)$ is asymptotically geodesic as well. We will define a notion called \textit{weakly asymptotic geodesic} below that extends Definition \ref{def_asy_geo} and corresponds to the sublinear case $E(M,x)=0$ (see Definition \ref{def_weak_asy_geo}).

\section{Geodesic orbit in the asymptotic cone}\label{sec_geod}

In this section, we introduce the notion \textit{weakly asymptotically geodesic} and show that (1)$\Rightarrow$(2)$\Rightarrow$(3) in Theorem \ref{thm_equivalent}.

Before doing so, we prove a result that mentioned in the introduction.

\begin{lem}\label{fg}
	If $\pi_1(M,x)$ is not finitely generated, then $E(M,x)=1/2$.
\end{lem}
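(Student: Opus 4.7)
My plan is to prove the contrapositive: if $E(M,x)<1/2$, then $\pi_1(M,x)$ is finitely generated. By the definition of the escape rate as a limsup, the assumption gives some $\beta\in(E(M,x),1/2)$ and some threshold $L>0$ such that $d_H(x,c_\gamma)\le \beta|\gamma|$ for every $\gamma\in\Gamma$ with $|\gamma|\ge L$.

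The key technical tool I would establish first is a concatenation lemma: if $c_\gamma$ is contained in $\overline{B_R(x)}$, then for any $\epsilon>0$, one can write $\gamma=\gamma_1\cdots\gamma_N$ in $\pi_1(M,x)$ with $|\gamma_j|\le 2R+\epsilon$ for all $j$. The construction is the standard one: parametrize $c_\gamma$ by arc length, subdivide it into $N$ sub-arcs at points $p_0=x,p_1,\ldots,p_N=x$ with $\ell(c_\gamma|_{[p_{j-1},p_j]})\le |\gamma|/N<\epsilon$, pick a minimizing geodesic $\sigma_j$ from $x$ to $p_j$ of length at most $R$ (with $\sigma_0=\sigma_N$ the constant path), and define $\alpha_j=\sigma_{j-1}\cdot c_\gamma|_{[p_{j-1},p_j]}\cdot\bar\sigma_j$. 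Then $\alpha_1\cdots\alpha_N$ is homotopic to $c_\gamma$ rel $x$, so the classes $\gamma_j=[\alpha_j]$ satisfy the required product relation, and each has length at most $2R+|\gamma|/N$.

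Combining these two inputs, for every $\gamma$ with $|\gamma|\ge L$ I can decompose $\gamma=\gamma_1\cdots\gamma_N$ with $|\gamma_j|\le 2\beta|\gamma|+\epsilon$ for any prescribed $\epsilon>0$. Fixing $\mu\in(2\beta,1)$ and then choosing a uniform $\epsilon>0$ and a threshold $L_1\ge L$ large enough that $2\beta r+\epsilon\le \mu r$ for all $r\ge L_1$, the decomposition step strictly shrinks the word length by a factor of $\mu<1$ whenever a factor still has length $\ge L_1$. Iterating this shrinking finitely many times exhibits every $\gamma\in\Gamma$ as a product of elements of length at most $L_1$. Since the $\Gamma$-action on $\widetilde{M}$ is properly discontinuous, $\{\gamma\in\Gamma:|\gamma|\le L_1\}$ is a finite generating set, contradicting the hypothesis that $\pi_1(M,x)$ is not finitely generated. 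Recalling the general bound $E(M,x)\le 1/2$ (which follows because any point on a closed geodesic of length $|\gamma|$ is within $|\gamma|/2$ of the basepoint along the shorter arc), we conclude $E(M,x)=1/2$.

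The only real obstacle I anticipate is bookkeeping: choosing $\beta$, $\mu$, $\epsilon$, and the threshold $L_1$ in the right order so that the iteration terminates and yields a \emph{uniform} bound on the generators, and verifying that the homotopy-level identity $[\alpha_1]\cdots[\alpha_N]=\gamma$ in the concatenation lemma is correctly stated (so that the lengths of the $\alpha_j$, and not merely of their lifts in $\widetilde{M}$, control $|\gamma_j|=d(\tilde{x},\gamma_j\tilde{x})$). Both points are routine once set up, and neither requires any curvature hypothesis beyond what is used implicitly to have a well-defined minimal loop $c_\gamma$.
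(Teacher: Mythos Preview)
Your argument is correct, but it follows a different route from the paper's. The paper's proof is a one-liner invoking Sormani's \emph{halfway lemma} \cite[Lemma~5]{Sor}: when $\pi_1(M,x)$ is not finitely generated, one can inductively choose generators $\gamma_1,\gamma_2,\ldots$ whose minimal representing loops $c_i$ are minimizing up to their midpoints, forcing $d_H(x,c_i)/|\gamma_i|=1/2$ exactly; since $|\gamma_i|\to\infty$, the limsup is $1/2$. You instead prove the contrapositive directly, via an iterative short-loop decomposition: from $E(M,x)<1/2$ you extract a contraction factor $\mu=2\beta<1$ on the lengths of the factors in each concatenation step, and iterate until every factor lies in the finite set $\{\gamma:|\gamma|\le L_1\}$. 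Your approach is self-contained and avoids citing Sormani's result (in effect it reproves the finite-generation criterion that underlies her lemma), at the cost of some bookkeeping with the constants $\beta,\mu,\epsilon,L_1$. The paper's approach is shorter but depends on an external reference. Both arguments are curvature-free; completeness suffices for the existence of the minimal loops $c_\gamma$.
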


\begin{proof}
	If $\pi_1(M,x)$ is not finitely generated, then one can choose a sequence of generators $\{\gamma_1,...\gamma_i,..\}$ such that $c_i$, the minimal representing geodesic loop of $\gamma_i$ is minimal up to halfway \cite[Lemma 5]{Sor}. In other words, each $\gamma_i$ satisfies
	$$\dfrac{d_{H}(x,c_i)}{|\gamma_i|}=\dfrac{1}{2}.$$
	The result follows immediately. 
\end{proof}

It is conjectured by Milnor that $\pi_1(M,x)$ is always finitely generated when $M$ has nonnegative Ricci curvature \cite{Mil}.

\begin{defn}\label{def_weak_asy_geo}
	Let $\Gamma$ be a finitely generated group with a left-invariant metric $\rho$. We say that $(\Gamma,\rho)$ is \textit{weakly asymptotically geodesic}, if there is a function $s(\epsilon,R)$ with the properties below:\\
	(1) for every fixed $\epsilon$, $R/s(\epsilon,R)\to \infty$ as $R\to\infty$;\\
	(2) for any $\epsilon>0$, $R>0$ and any $\gamma\in \Gamma$ with $\rho(e,\gamma)=R$, we can find a word $\prod_{j=1}^N \gamma_j=\gamma$ such that
	$$\sum_{j=1}^N\rho(e,\gamma_j)\le (1+\epsilon)\rho(e,\gamma)$$
	and $\rho(e,\gamma_j)\le s(\epsilon,R)$ for all $j$.
\end{defn}

%

\begin{prop}\label{asy_geo}
	Let $(M,x)$ be an open manifold and let $(\widetilde{M},\tilde{x})$ be its Riemannian universal cover. If $E(M,x)=0$, then $(\Gamma,\rho)$ is weakly asymptotic geodesic, where $\rho$ is given by the orbit $\Gamma\cdot \tilde{x}$ as $\rho(\gamma_1,\gamma_2)=d(\gamma_1\tilde{x},\gamma_2\tilde{x})$ for all $\gamma_1,\gamma_2\in\Gamma$.
\end{prop}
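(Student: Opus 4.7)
The strategy is to convert the escape-rate hypothesis into a quantitative ``density of the orbit along minimal geodesics''. The key observation is that if $c_\gamma \subset B_{\delta|\gamma|}(x)$ in $M$, then its lift from $\tilde{x}$ in $\widetilde{M}$ stays close to the orbit $\Gamma\cdot\tilde{x}$, since for any covering map $\pi: \widetilde{M}\to M$ and any $\tilde{p}\in\widetilde{M}$, one has the standard identity $d(\tilde{p},\Gamma\cdot\tilde{x}) = d(\pi(\tilde{p}),x)$. This lets us discretize the lifted geodesic into small subarcs whose endpoints lie near orbit points, and read off the desired decomposition of $\gamma$.

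First I would set $\delta(R) = \sup\{d_H(x,c_\eta)/|\eta| : \eta\in\Gamma,\ |\eta|\ge R\}$; the hypothesis $E(M,x) = 0$ gives $\delta(R) \to 0$ as $R\to\infty$, and by Lemma \ref{fg} we may assume $\Gamma$ is finitely generated (actually, $E(M,x)=0<1/2$ forces this), and in particular $\delta(R)>0$ for all $R$ when $\Gamma$ is infinite. Given $\epsilon>0$ and $\gamma\in\Gamma$ with $|\gamma|=R$ large, lift $c_\gamma$ to a minimal geodesic $\tilde{c}:[0,R]\to\widetilde{M}$ from $\tilde{x}$ to $\gamma\tilde{x}$. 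Choose $N = \lfloor \epsilon/(2\delta(R))\rfloor$, subdivide $[0,R]$ into $N$ equal subintervals, and let $p_j = \tilde{c}(jR/N)$ for $j=0,1,\ldots,N$. Since $\pi(p_j)\in c_\gamma\subset \overline{B_{\delta(R) R}(x)}$, the identity above gives $d(p_j,\Gamma\cdot\tilde{x})\le \delta(R) R$, so we can pick $\eta_j\in\Gamma$ with $d(\eta_j\tilde{x},p_j)\le \delta(R) R$ (picking the closest orbit point, which exists by discreteness of $\Gamma\cdot\tilde{x}$), and we set $\eta_0 = e$ and $\eta_N = \gamma$ so that the endpoints match exactly.

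Next, define $\gamma_j = \eta_{j-1}^{-1}\eta_j$ for $j=1,\ldots,N$. Then $\prod_{j=1}^N \gamma_j = \gamma$, and by the triangle inequality applied along the subarcs,
$$\rho(e,\gamma_j) = d(\eta_{j-1}\tilde{x},\eta_j\tilde{x}) \le \frac{R}{N} + 2\delta(R) R.$$
Summing yields $\sum_{j=1}^N \rho(e,\gamma_j) \le R + 2N\delta(R) R \le (1+\epsilon)R$ by the choice of $N$. So we take
$$s(\epsilon,R) = \frac{R}{N} + 2\delta(R) R \le 2\delta(R) R\Bigl(1 + \tfrac{2}{\epsilon}\Bigr)$$
(for $R$ large enough that $N\ge \epsilon/(4\delta(R))$, say), and for the remaining bounded range of $R$ one can simply set $s(\epsilon,R)=R$ and use the trivial decomposition $N=1$. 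Then $s(\epsilon,R)/R \to 0$ because $\delta(R)\to 0$ for fixed $\epsilon$, verifying condition (1) of Definition \ref{def_weak_asy_geo}.

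The proof is essentially a discretization argument, and I do not anticipate a serious obstacle; the only mildly delicate point is balancing the two error sources. The chosen subarc length $R/N$ must be large enough that the ``horizontal'' errors $\delta(R) R$ at each endpoint do not accumulate beyond $\epsilon R$ when summed over $N$ steps (forcing $N \lesssim \epsilon/\delta(R)$), yet the ``vertical'' step size $s(\epsilon,R)\approx R/N + \delta(R) R$ must still be sublinear in $R$. Both demands are met precisely because $E(M,x) = 0$ gives $\delta(R)\to 0$, so $R\delta(R)$ is simultaneously $o(R)$ and can be made $\le \epsilon R/(2N)$ for a suitably diverging $N$.
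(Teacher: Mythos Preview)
Your proposal is correct and follows essentially the same approach as the paper: lift $c_\gamma$ to a minimal geodesic in $\widetilde{M}$, subdivide it, approximate the subdivision points by orbit points (using that $\pi^{-1}(\overline{B_D(x)})$ has compact quotient, equivalently $d(\tilde p,\Gamma\cdot\tilde x)=d(\pi(\tilde p),x)$), and read off the factorization $\gamma=\prod\gamma_j$. The only cosmetic differences are that the paper works with the absolute quantity $D(R)=\max_{|\gamma|\le R}d_H(x,c_\gamma)$ instead of your relative $\delta(R)=\sup_{|\gamma|\ge R}d_H(x,c_\gamma)/|\gamma|$, and fixes the subarc length to be $2\epsilon^{-1}D(R)$ rather than fixing $N$ first; the resulting $s(\epsilon,R)$ and the arithmetic are essentially the same.
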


\begin{proof}
   Let $\Gamma(R)=\{\gamma \in \Gamma|d(\tilde{x},\gamma\tilde{x})\le R \}$ and let
   $$D(R)=\max_{\gamma\in \Gamma(R)} d_H(x,c_\gamma).$$
   It is clear that $E(M,x)=0$ implies that $D(R)/R\to 0$ as $R\to\infty$.
   Let $\epsilon>0$. We define 
   $$s(\epsilon,R)=2(\epsilon^{-1}+1)D(R).$$
   For any fixed $\epsilon>0$, $R/s(\epsilon,R)\to \infty$ as $R\to\infty$.
   
   Let $\gamma\in \Gamma$ with $R=\rho(e,\gamma)$. If $R\le s(\epsilon,R)$, condition (2) in Definition \ref{def_weak_asy_geo} holds trivially. We assume that $R>s(\epsilon,R)$ below. Let $c$ be a minimal geodesic loop based at $x$ that represents $\gamma$. $c$ is contained in $\overline{B_{D(R)}}(x)$. Lifting $c$ to the universal cover $(\widetilde{M},\tilde{x})$, we obtain a minimal geodesic $\tilde{c}$ from $\tilde{x}$ to $\gamma\tilde{x}$ contained in $\pi^{-1}(\overline{B_{D(R)}}({x}))$ and of length $R$, where $\pi:\widetilde{M}\to M$ is the covering map. Re-parameterizing $\tilde{c}$ if necessary, we assume that $\tilde{c}: [0,R]\to \widetilde{M}$ is of unit speed. We choose a series of points $\{\tilde{c}(t_j)\}_{j=0}^N$ on $\tilde{c}$ such that $t_0=0$, $t_N=R$ and
   $$t_{j}-t_{j-1}=2\epsilon^{-1}D(R) \text{ for } j=1,...,N-1, \quad t_N-t_{N-1}\le 2\epsilon^{-1}D(R).$$
   It is clear that 
   $$N\le 1+\frac{R}{2\epsilon^{-1}D(R)}.$$ 
   Because $\pi^{-1}(\overline{B_{D(R)}}(x))$ is $\Gamma$-invariant with a compact quotient $\overline{B_{D(R)}}(x)$, for each $j$ there is $\beta_j\in \Gamma$ with $d(\beta_j\tilde{x},\tilde{c}(t_j))\le D(R)$; we would always choose $\beta_0=e$ and $\beta_N=\gamma$. Let $\gamma_j=\beta_{j-1}^{-1} \beta_j $ for $j=1,...,N$. Then $\gamma=\prod_{j=1}^N \gamma_j$. Note that for $j=1,...,N$,
   \begin{align*}
   \rho(e,\gamma_j)&=d(\beta_{j-1}\tilde{x},\beta_j\tilde{x})\\
   &\le d(\beta_{j-1}\tilde{x},\tilde{c}(t_{j-1}))+d(\tilde{c}(t_{j-1}),\tilde{c}(t_j))+d(\tilde{c}(t_j),\beta_j\tilde{x})\\
   &\le D(R)+2\epsilon^{-1}D(R)+D(R)\\
   &=s(\epsilon,R).
   \end{align*}
   Moreover, 
   \begin{align*}
   &\sum_{j=1}^N \rho(e,\gamma_j)=\sum_{j=1}^N d(\beta_{j-1}\tilde{x},\beta_j\tilde{x})\\
   \le& (D(R)+|t_1-t_0|)+\sum_{j=1}^{N-2} (2D(R)+|t_{j+1}-t_j|)+ (D(R)+|t_N-t_{N-1}|)\\
   =&2(N-1)D(R)+R\\
   \le&2\frac{R}{2\epsilon^{-1}D(R)}D(R)+R\\
   =&(1+\epsilon)\rho(e,\gamma).
   \end{align*}
   This completes the proof.
\end{proof}

\begin{defn}
	Let $(X,d)$ be a length metric space and let $Z$ be a closed subset of $X$. We say that $Z$ is \textit{geodesic} in $X$ if the intrinsic metric on $Z$ coincides with the extrinsic one; in other words, for any $z_1,z_2\in Z$, there is a minimal geodesic connecting them that is contained in $Z$.
\end{defn}

\begin{prop}\label{orbit_geo}
	Let $(M,x)$ be an open $n$-manifold of $\mathrm{Ric}\ge0$. If the orbit $\Gamma\cdot\tilde{x}$ on the universal cover is weakly asymptotically geodesic, then for any equivariant asymptotic cone $(Y,y,G)$ of $(\widetilde{M},\Gamma)$, the limit orbit $G\cdot y$ is geodesic in $Y$.
\end{prop}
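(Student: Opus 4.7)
The plan is to fix $g \in G$ and construct, inside $G\cdot y$, a minimal geodesic from $y$ to $gy$; since $G$ acts by isometries on $Y$, this immediately yields the analogous geodesic between any two orbit points. To set up, I choose $\gamma_i \in \Gamma$ with $\gamma_i\tilde{x} \to gy$ in the convergence $(r_i^{-1}\widetilde{M},\tilde{x},\Gamma) \to (Y,y,G)$, and let $R_i = d(\tilde{x}, \gamma_i\tilde{x})$, $D = d(y,gy)$, so that $R_i/r_i \to D$.

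Next, fix $\epsilon > 0$. The weakly asymptotically geodesic hypothesis applied to $\gamma_i$ yields a decomposition $\gamma_i = \prod_{j=1}^{N_i} \gamma_{i,j}$ with $\rho(e,\gamma_{i,j}) \le s(\epsilon,R_i)$ and total length $L_i := \sum_j \rho(e,\gamma_{i,j}) \le (1+\epsilon) R_i$. Writing $\beta_{i,k} = \prod_{j=1}^k \gamma_{i,j}$, I concatenate minimal geodesics in $\widetilde{M}$ between successive orbit points $\beta_{i,k-1}\tilde{x}$ and $\beta_{i,k}\tilde{x}$ to produce a unit-speed curve $\sigma_i : [0,L_i] \to \widetilde{M}$ from $\tilde{x}$ to $\gamma_i\tilde{x}$. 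In the rescaled metric $r_i^{-1} d$, the curve has domain $[0, L_i/r_i]$ with $L_i/r_i \le (1+\epsilon) R_i/r_i \to (1+\epsilon) D$, and is still $1$-Lipschitz.

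Since $Y$ is a proper Ricci limit space, Arzela-Ascoli in the pointed Gromov-Hausdorff setting extracts a subsequential limit $\sigma : [0,T] \to Y$ which is $1$-Lipschitz with $\sigma(0)=y$, $\sigma(T)=gy$, and $T \le (1+\epsilon)D$. The crucial claim is that $\sigma([0,T]) \subseteq G\cdot y$. For any $t \in [0,T]$, I pick indices $k_i$ so that the partial sums $L_{k_i} = \sum_{j\le k_i} \rho(e,\gamma_{i,j})$ satisfy $L_{k_i}/r_i \to t$; this is possible because consecutive partial sums differ by at most $s(\epsilon, R_i)$, and after rescaling by $r_i^{-1}$ these gaps vanish, combining $s(\epsilon,R)/R \to 0$ with $R_i/r_i \to D$. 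Since $\sigma_i(L_{k_i}/r_i) = \beta_{i,k_i}\tilde{x}$ is an orbit point, uniform convergence forces $\sigma(t) = \lim_i \beta_{i,k_i}\tilde{x} \in G\cdot y$, using that $G\cdot y$ is closed (inherited from the properness of the $\Gamma$-action on $\widetilde{M}$ under equivariant GH convergence).

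Finally, feeding in $\epsilon_n \to 0$ produces $1$-Lipschitz curves $\sigma^{(n)} \subset G\cdot y$ from $y$ to $gy$ of length at most $(1+\epsilon_n)D$, and a further Arzela-Ascoli extraction gives a subsequential limit $\sigma^{(\infty)} \subset G\cdot y$ of length at most $D$, hence a minimal geodesic in $Y$ lying inside $G\cdot y$. The main obstacle is the previous paragraph, namely verifying that the limit curve $\sigma$ is actually swept out by limits of orbit points: this is the single place where the full strength of the weak asymptotic geodesic hypothesis is used, and the sublinearity $R/s(\epsilon,R) \to \infty$ is exactly what makes the rescaled word-step sizes vanish as $i \to \infty$.
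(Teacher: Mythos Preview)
Your argument is correct and arrives at the same conclusion, but by a different mechanism than the paper. The paper never constructs a limit curve. Instead, after obtaining the decomposition $\gamma_i = \prod_{j=1}^{N_i}\gamma_{i,j}$ with $\rho(e,\gamma_{i,j})\le s(\epsilon,R_i)$, it \emph{regroups} consecutive letters into blocks $g_{i,j}$ of controlled size $\delta r_i/2 < \rho(e,g_{i,j}) < \delta r_i$. The point of this regrouping is that the number $K_i$ of blocks is then uniformly bounded by $2(1+\epsilon)\delta^{-1}d_Y(gy,y)$, so after a subsequence one may assume $K_i\equiv K$ and pass each $g_{i,j}$ individually to a limit $g_j\in G$. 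This yields, for every $\epsilon,\delta>0$, a word $g=\prod_{j=1}^K g_j$ with $d_Y(g_jy,y)\le\delta$ and $\sum_j d_Y(g_jy,y)\le(1+\epsilon)d_Y(gy,y)$, which suffices for the orbit to be geodesic.

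Your route bypasses the regrouping by passing the entire concatenated curve to a limit via Arzel\`a--Ascoli, and then observing that the breakpoints become dense after rescaling (precisely because $s(\epsilon,R_i)/r_i\to 0$), so every point of the limit curve is a limit of orbit points and hence lies in $G\cdot y$. This is arguably more direct, since it produces the minimal geodesic explicitly rather than leaving the final step to length-space generalities; on the other hand, the paper's regrouping trick trades the Arzel\`a--Ascoli machinery and the double limit $\epsilon_n\to 0$ for a finite list of group elements to track. One small remark: your invocation of ``$G\cdot y$ closed'' is not quite the right justification---what you actually use is that the bounded-displacement elements $\beta_{i,k_i}$ subconverge to some $h\in G$, whence $\beta_{i,k_i}\tilde{x}\to hy$, which is exactly how equivariant Gromov--Hausdorff convergence is set up.
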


\begin{proof}
	By the homogeneity of the orbit of $G\cdot y$, it suffices to show that for any $\epsilon,\delta>0$ and any $g\in G$ that does not fix $y$, there is a word $\prod_{j=1}^N g_j=g$ with
	$$\sum_{j=1}^N d_Y(g_jy,y)\le (1+\epsilon)d_Y(gy,y),\quad d_Y(g_jy,y)\le\delta.$$
		
	Let $r_i\to\infty$ such that $(r_i^{-1}\widetilde{M},\tilde{x},\Gamma)$ converges to $(Y,y,G)$ in the equivariant pointed Gromov-Hausdorff sense. We can find $\gamma_i\in \Gamma$ converges to $g$ associated to the above sequence. Let $R_i=d(\gamma_i\tilde{x},\tilde{x})$, then $$R_i/r_i\to d_Y(gy,y)>0.$$ For each $i$, there is $s_i=s(\epsilon,R_i)$ with $R_i/s_i\to\infty$ and a word $\prod_{j=1}^{N_i} \gamma_{i,j}=\gamma_j$ such that $\rho(e,\gamma_{i,j})\le s_i$ for each $i$ and
	$$\sum_{j=1}^{N_i} \rho(e,\gamma_{i,j})\le (1+\epsilon)\rho(e,\gamma_i),$$
	where $s(\epsilon,R)>0$ as in Definition \ref{def_weak_asy_geo}. Since $r_i/s_i\to\infty$, for each $i$ large, we can group successive portions of the word $\prod_{j=1}^{N_i} \gamma_{i,j}$ into a new word $\prod_{j=1}^{K_i} g_{i,j}=\gamma_i$ such that
	$$\delta r_i/2<\rho(e,g_{i,j})<\delta r_i.$$
	By triangle inequality,
	$$\sum_{j=1}^{K_i} \rho(e,g_{i,j})\le \sum_{j=1}^{N_i} \rho(e,\gamma_{i,j})\le (1+\epsilon)\rho(e,\gamma_i).$$
	Combining the above with $\rho(e,g_{i,j})\ge \delta r_i/2$, we have 
	$$K_i\le 2(1+\epsilon)\delta^{-1}r_i^{-1}\rho(e,\gamma_i)\to 2(1+\epsilon)\delta^{-1}d_Y(gy,y).$$
	Passing to a subsequence of $(r_i^{-1}\widetilde{M},\tilde{x},\Gamma)$ if necessary, we can assume that all $K_i$ are the same, denoted as $K$. For each $j=1,...,K$, $\{g_{i,j}\}_i$ sub-converges some element $g_j\in G$, associated to the convergent sequence $(r_i^{-1}\widetilde{M},\tilde{x},\Gamma)\overset{GH}\longrightarrow (Y,y,G)$. Then
	$$d_Y(g_jy,y)=\lim\limits_{i\to\infty} r_i^{-1}\rho(e,g_{i,j}) \le \lim\limits_{i\to\infty }r_i^{-1}\cdot \delta r_i=\delta,$$
	\begin{align*}
	\sum_{j=1}^K d_Y(g_jy,y)&=\lim\limits_{i\to\infty} \sum_{j=1}^K r_i^{-1}\rho(e,g_{i,j})\\
	&\le \lim\limits_{i\to\infty} r_i^{-1} (1+\epsilon)\rho(e,\gamma_i)\\
	&=(1+\epsilon)d_Y(gy,y).
	\end{align*}
    This completes the proof.
\end{proof}

\section{Euclidean orbit in the asymptotic cone}\label{sec_eu}

We prove (3)$\Rightarrow$(4)$\Rightarrow$(1) in Theorem \ref{thm_equivalent} and then Theorem A in this section.

\begin{lem}\label{geo_in_product}
	Let $Y$ be a locally compact length metric space. Let $N$ be a closed subset in the product metric space $\mathbb{R}^k\times Y$, where $\mathbb{R}^k$ is endowed with the standard Euclidean metric. Suppose that $N$ is geodesic in $\mathbb{R}^k\times Y$ and $N$ contains a slice $\mathbb{R}^k\times \{y\}$ for some $y\in Y$. Then $N$ equals $\mathbb{R}^k\times Z$ as a subset of $\mathbb{R}^k\times Y$, where
	 $$Z=N \cap (\{0\}\times Y);$$
	 in particular, $N$ is a product metric.
\end{lem}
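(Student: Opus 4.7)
My plan is to prove $N = \mathbb{R}^k \times Z$ as sets, from which the product metric conclusion is immediate since $N$ then inherits the ambient product structure. The argument rests on the following structural observation about the ambient space: any minimal geodesic in the product $\mathbb{R}^k \times Y$ (with the $L^2$ metric) from $(a_1, b_1)$ to $(a_2, b_2)$ splits as $\gamma(t) = (\alpha(t), \beta(t))$, $t \in [0,1]$, where $\alpha(t) = a_1 + t(a_2 - a_1)$ is the Euclidean straight-line parameterization and $\beta$ is a minimal geodesic from $b_1$ to $b_2$ in $Y$ at constant speed. I would establish this by applying Minkowski's inequality to the pairs $(|\alpha(t)-a_1|, d_Y(\beta(t),b_1))$ and $(|a_2-\alpha(t)|, d_Y(b_2,\beta(t)))$ together with the triangle inequality in $\mathbb{R}^k$ and $Y$, and tracking the two equality cases; uniqueness of the straight line as a geodesic in Euclidean space forces the $\alpha$-component. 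Because $N$ is geodesic in $\mathbb{R}^k \times Y$, any minimal geodesic in $N$ is also minimal in the ambient product and therefore has this split form.

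For the inclusion $N \subseteq \mathbb{R}^k \times Z$, given $(v,z) \in N$ with $v \neq 0$, for each large $R$ the point $(-Rv, y)$ lies in $N$ by the hypothesis $\mathbb{R}^k \times \{y\} \subseteq N$, so there is a minimal geodesic $\gamma_R$ in $N$ from $(v,z)$ to $(-Rv, y)$. By the splitting, its $\mathbb{R}^k$-component is $t \mapsto (1 - (R+1)t)v$, which hits $0$ at $t_R = 1/(R+1)$, while its $Y$-component $\sigma_R$ at time $t_R$ satisfies $d_Y(\sigma_R(t_R), z) = t_R\, d_Y(z, y)$. The resulting points $(0, \sigma_R(t_R)) \in N$ converge to $(0,z)$ as $R \to \infty$, and closedness of $N$ gives $(0,z) \in N$, i.e.\ $z \in Z$.

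For the reverse inclusion $\mathbb{R}^k \times Z \subseteq N$, fix $z \in Z$ and $w \in \mathbb{R}^k$ and swap the roles: for each large $R$, join $(0, z) \in N$ to $(Rw, y) \in N$ by a minimal geodesic in $N$. Its $\mathbb{R}^k$-component is $t \mapsto tRw$, which equals $w$ at $t = 1/R$, and its $Y$-component at that time is within $d_Y(z,y)/R$ of $z$. Passing to the limit as $R \to \infty$ and using closedness of $N$, we conclude $(w,z) \in N$. Combining both inclusions yields $N = \mathbb{R}^k \times Z$.

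The main obstacle is the splitting of minimal geodesics in the product; once that lemma is in hand, the two limiting arguments are essentially the same and the slice $\mathbb{R}^k \times \{y\}$ serves as an arbitrarily-far reservoir that lets us translate the $\mathbb{R}^k$-component of any point of $N$ freely without disturbing its $Y$-component in the limit.
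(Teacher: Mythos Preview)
Your proof is correct and follows essentially the same strategy as the paper's: connect an arbitrary point of $N$ to far-away points in the given slice $\mathbb{R}^k\times\{y\}$ and exploit the splitting of ambient geodesics to control the $Y$-component in the limit. The only difference is in execution: the paper lets the finite geodesics sub-converge to a ray contained in $N$, whereas you pick a specific point on each finite geodesic (the one where the $\mathbb{R}^k$-coordinate hits the target) and invoke closedness directly; your version is marginally cleaner since it sidesteps the Arzel\`a--Ascoli step, and it makes explicit the product-geodesic splitting that the paper uses tacitly.
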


\begin{proof}
    We denote $l(t)=(tv,y)$ as an arbitrary unit speed line in the slice $\mathbb{R}^k\times \{y\}$, where $v$ is some unit vector in $\mathbb{R}^k$. Because $N$ is geodesic, for any point $z\in Z$ and any point $l(t)$ on the line, we can draw a unit speed minimal geodesic $\sigma^z_t(s)$ in $N$ from $z$ to $l(t)$. As $t\to\infty$, $\sigma^z_t$ sub-converges to a unit speed ray $\sigma^z_\infty(s)$ starting at $z$ of the form $(sv,z)$ in the product coordinate $\mathbb{R}^k\times Z$. Because $N$ is closed, $\sigma_\infty^z$ must be contained in $N$. Since $v\in\mathbb{R}^k$ and $z\in Z$ are arbitrary, we conclude that $\mathbb{R}^k\times Z\subseteq N$. 
    
    For the other direction $N\subseteq \mathbb{R}^k\times Z$, the argument is similar. Suppose that there is a point $(t_0v,y')$ in $N$ but outside $\mathbb{R}^k\times Z$, where $v$ is of unit length. By drawing minimal geodesics from $(t_0v,y')$ to points on $l(t)=(tv,y)$ with $s<0$ and passing to a convergent subsequence, we obtain a ray $s\mapsto((t_0-s)v,y')$ that is contained in $N$. As this ray passes through $\{0\}\times Y$, we see that $(0,y')\in Z$.
\end{proof}

With Lemma \ref{geo_in_product} above and the Cheeger-Colding splitting theorem, we show that a geodesic orbit $G\cdot y$ must be a product of $\mathbb{R}^k$ and a compact space.

\begin{lem}\label{orbit_eu_cpt}
	Let $(Y,y,G)$ be an equivariant asymptotic cone of $(\widetilde{M},\Gamma)$, where $M$ has $\mathrm{Ric}\ge 0$. Suppose that the orbit $G\cdot y$ is geodesic in $Y$. Then $G\cdot y$ splits isometrically as $\mathbb{R}^k\times Z$, where $Z$ is a compact length metric space.
\end{lem}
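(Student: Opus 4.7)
The plan is to extract the maximal Euclidean factor of $O := G \cdot y$ by iterated Cheeger--Colding splitting, and then show that the transverse piece is compact. First, $O$ inherits from the hypotheses that its intrinsic metric equals the restriction of $d_Y$, so $O$ is a geodesic length space; as a $G$-orbit it is homogeneous; and as a closed subset of the locally compact space $Y$ it is closed and locally compact.

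Let $k$ be the largest integer such that $O$ contains an isometrically embedded copy of $\mathbb{R}^k$ through $y$; finiteness follows from the Hausdorff dimension bound $\dim_H Y \le n$. Since this $\mathbb{R}^k$ sits isometrically in $Y$ as well, one picks a line in it, applies the Cheeger--Colding splitting theorem to $Y$, and uses Lemma \ref{geo_in_product} on the Euclidean subset to extract a perpendicular $\mathbb{R}^{k-1}$; iterating $k$ times yields $Y = \mathbb{R}^k \times Y_1$ with the embedded $\mathbb{R}^k$ realized as the slice $\mathbb{R}^k \times \{y_1\}$. A final application of Lemma \ref{geo_in_product} now to $O$ inside $\mathbb{R}^k \times Y_1$ gives
\[ O = \mathbb{R}^k \times Z, \qquad Z := O \cap (\{0\} \times Y_1). \]

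By the maximality of $k$, $Z$ contains no line: any such $\ell' \subset Z$ would combine with the Euclidean factor to form an isometric $\mathbb{R}^{k+1} = \mathbb{R}^k \times \ell' \subset O$ through $y$, contradicting maximality. Next, the $G$-action descends transitively to $Z$. The key observation is that, because $Z$ has no line, every line in $\mathbb{R}^k \times Z$ has trivial $Z$-projection (in an $\ell^2$-product the two components of a unit-speed geodesic are themselves constant-speed geodesics, so a nontrivial $Z$-component would itself be a line); consequently every isometrically embedded $\mathbb{R}^k$ in $\mathbb{R}^k \times Z$ lies in a single slice $\mathbb{R}^k \times \{z\}$, being covered by the lines through any one of its points. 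Thus each $g \in G$ restricted to $O$ maps slices to slices and splits as $g = T_g \times \phi_g$ with $T_g \in \mathrm{Isom}(\mathbb{R}^k)$ and $\phi_g \in \mathrm{Isom}(Z)$; transitivity of $G$ on $O$ then gives transitivity on $Z$.

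Hence $Z$ is a locally compact, homogeneous, geodesic length space with no lines. Were $Z$ unbounded, a standard midpoint-subsequence argument using homogeneity (take $z_i \in Z$ with $d(z_0, z_i) \to \infty$, use $G$ to translate the midpoints of minimizing segments to $z_0$, and extract a subsequential limit) would produce a line in $Z$, a contradiction; therefore $Z$ is bounded and so compact. The most delicate step throughout is the metric de Rham-type slice-preservation of the $G$-action on $\mathbb{R}^k \times Z$, which rests on the constant-speed structure of geodesics in an $\ell^2$-product together with the absence of lines in $Z$.
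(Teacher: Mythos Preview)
Your proof is correct and rests on the same core ingredients as the paper: Cheeger--Colding splitting, Lemma~\ref{geo_in_product}, and the midpoint trick that converts homogeneity plus unboundedness into a line. The paper organizes these inductively --- at each stage it uses the homogeneity of $G\cdot y$ to manufacture a line through $y$, splits once via Cheeger--Colding and Lemma~\ref{geo_in_product}, and then tersely says ``apply the above argument again'' to continue until the cofactor is compact. You instead split off the maximal $\mathbb{R}^k$ in one pass and then prove the cofactor $Z$ is compact by first establishing that $Z$ itself is homogeneous (via your slice-preservation argument, which crucially uses that $Z$ has no line by maximality of $k$) and then running the midpoint argument inside $Z$. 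Your route makes explicit the point the paper's iteration glosses over --- namely, why the transverse factor inherits enough symmetry for the argument to advance --- at the price of the extra metric de~Rham step; the paper's version is shorter but leaves that step to the reader. One small imprecision: a line $\ell'\subset Z$ gives $\mathbb{R}^k\times\ell'\cong\mathbb{R}^{k+1}$ inside $O$, but this flat need not pass through $y$ unless $y_1\in\ell'$; invoke homogeneity of $O$ once more to translate it there, or equivalently drop the ``through $y$'' clause in your definition of $k$.
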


\begin{proof}
	If $G\cdot y$ is compact, the conclusion holds trivially. We assume that $G\cdot y$ is non-compact. We pick any point $q\in G\cdot y$ different from $y$. Because $G\cdot y$ is geodesic in $Y$, there is a minimal geodesic $c_q$ that is contained in $G\cdot y$ and connects $y$ to $q$. Let $m$ be the midpoint of $c_q$, and let $h\in G$ such that $hy=m$. Then $h^{-1}\circ c_q$ is a minimal geodesic contained in $G\cdot y$ of midpoint $y$ and length $R=d(y,q)$. For a sequence $q_i\in G\cdot y$ with $R_i=d(y,q_i)\to\infty$, the above process gives a sequence of minimal geodesics in $G\cdot y$ with the same midpoint $y$ and has length $R_i\to\infty$. This sequence of minimal geodesics sub-converges to a line in $G\cdot y$, which is also a line in $Y$. By the Cheeger-Colding splitting theorem \cite{CC1}, $Y$ splits isometrically as $\mathbb{R}\times Y_1$. By Lemma \ref{geo_in_product}, $G\cdot y$ splits isometrically as $\mathbb{R}\times Z_1$ as well. 
	 
	If $Z_1$ is non-compact, we can apply the above argument and Lemma \ref{geo_in_product} again to split $G\cdot y$ isometrically as $\mathbb{R}^2\times Z_2$. Repeating this process, eventually we obtain the desired statement.
\end{proof}

Next we rule out the compact factor $Z$ in the orbit $G\cdot y=\mathbb{R}^k\times Z$. To achieve this, we will consider the set of all the equivariant asymptotic cones of $(\widetilde{M},\Gamma)$ as a whole, denoted as $\Omega(\widetilde{M},\Gamma)$. The following fact is well-known (see \cite[Section 2.1]{Pan_eu} for a proof).

\begin{prop}\label{cpt_cnt}
	Let $(M,x)$ be an open $n$-manifold with $\mathrm{Ric}\ge 0$. Then the set $\Omega(\widetilde{M},\Gamma)$ is compact and connected in the pointed equivariant Gromov-Hausdorff topology.
\end{prop}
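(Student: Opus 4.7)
The plan is to establish compactness and connectedness separately, by different but standard arguments. The proof relies on two basic facts: since $\mathrm{Ric}\ge 0$ on $\widetilde{M}$, Bishop-Gromov volume comparison plus the equivariant Gromov-Hausdorff precompactness theorem of Fukaya-Yamaguchi \cite{FY} guarantees that every scaling sequence $(r_i^{-1}\widetilde{M},\tilde{x},\Gamma)$ is sub-convergent (so in particular $\Omega(\widetilde{M},\Gamma)$ is non-empty); and the rescaling family $r\mapsto (r^{-1}\widetilde{M},\tilde{x},\Gamma)$ is continuous in $r>0$, because rescaling the metric by a factor close to $1$ gives an arbitrarily small pointed equivariant Gromov-Hausdorff perturbation on any fixed bounded ball.

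For compactness, I would take any sequence $(Y_k,y_k,G_k)\in \Omega(\widetilde{M},\Gamma)$, each of which is the equivariant GH-limit of some scaling sequence $(r_{k,i}^{-1}\widetilde{M},\tilde{x},\Gamma)$ as $i\to\infty$. A diagonal extraction selects indices $i_k$ with $r_{k,i_k}\to\infty$ and with $(r_{k,i_k}^{-1}\widetilde{M},\tilde{x},\Gamma)$ approximating $(Y_k,y_k,G_k)$ to within $1/k$ in a pseudo-distance realizing the pointed equivariant GH topology on bounded regions. The precompactness theorem yields a further subsequence converging to some $(Y_\infty,y_\infty,G_\infty)\in\Omega(\widetilde{M},\Gamma)$, and the triangle inequality shows the corresponding subsequence of $(Y_k,y_k,G_k)$ also converges to this limit.

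For connectedness, I would argue by contradiction. Note that $\Omega(\widetilde{M},\Gamma)$ is exactly the set of subsequential limits of the continuous curve $r\mapsto(r^{-1}\widetilde{M},\tilde{x},\Gamma)$ as $r\to\infty$. Suppose $\Omega(\widetilde{M},\Gamma)=A\cup B$ with $A,B$ non-empty, closed and disjoint. Compactness of $\Omega(\widetilde{M},\Gamma)$ gives a positive GH-gap $\delta$ between $A$ and $B$, so one may form disjoint $\delta/3$-neighborhoods $U_A,U_B$ of $A$ and $B$. For all $r$ large enough, $(r^{-1}\widetilde{M},\tilde{x},\Gamma)$ must lie in $U_A\cup U_B$, since otherwise a subsequential limit outside $\Omega(\widetilde{M},\Gamma)$ would exist; by the definitions of $A$ and $B$ both $U_A$ and $U_B$ are visited for arbitrarily large $r$. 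Continuity of the rescaling family in $r$ then forces a sequence of transition scales $t_k\to\infty$ at which $(t_k^{-1}\widetilde{M},\tilde{x},\Gamma)\notin U_A\cup U_B$; extracting a convergent subsequence produces an element of $\Omega(\widetilde{M},\Gamma)$ lying outside $A\cup B$, a contradiction.

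I do not expect any serious obstacle here; both steps are textbook applications once the Fukaya-Yamaguchi equivariant framework is in place. The only mild subtlety is that $\Omega(\widetilde{M},\Gamma)$ sits inside the space of isometry classes of pointed equivariant metric spaces rather than a genuine metric space, so one must be careful about what $1/k$-approximation and $\delta/3$-neighborhoods mean. This is handled by working with the pointed equivariant Gromov-Hausdorff pseudo-distance of \cite{FY} restricted to arbitrarily large bounded balls, after which both the diagonal extraction and the intermediate-scale argument go through verbatim.
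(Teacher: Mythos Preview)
The paper does not give its own proof of this proposition; it simply records the statement as well-known and refers to \cite[Section 2.1]{Pan_eu} for a proof. Your proposal is correct and is precisely the standard argument one finds in that reference: compactness via a diagonal extraction using Fukaya--Yamaguchi equivariant precompactness, and connectedness via continuity of the one-parameter rescaling family $r\mapsto(r^{-1}\widetilde{M},\tilde{x},\Gamma)$ together with an intermediate-scale contradiction argument.
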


We rule out the compact factor by applying a critical scaling argument, which relies on Proposition \ref{cpt_cnt} implicitly. See Remark \ref{rem_crit_scal} after the proof for differences between the argument here and the one in \cite{Pan_eu}, where this kind of argument was first introduced.

\begin{prop}\label{orbit_eu}
	Let $(M,x)$ be an open $n$-manifold of $\mathrm{Ric}\ge 0$. Suppose that for any space $(Y,y,G)\in\Omega(\widetilde{M},\Gamma)$, the orbit $G\cdot y$ is geodesic in $Y$. Then there is an integer $k$ such that $G\cdot y$ is isometric to the standard Euclidean $\mathbb{R}^k$ for any $(Y,y,G)\in\Omega(\widetilde{M},\Gamma)$.
\end{prop}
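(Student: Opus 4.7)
My plan is to argue by contradiction via a critical rescaling argument, exploiting the compactness and connectedness of $\Omega(\widetilde M,\Gamma)$ (Proposition \ref{cpt_cnt}) together with its invariance under the $\mathbb R_{>0}$-rescaling action $\lambda\cdot(Y,y,G)=(\lambda Y,y,G)$. Suppose for contradiction that some $(Y_0,y_0,G_0)\in\Omega$ has orbit $G_0\cdot y_0$ not isometric to any Euclidean space. By Lemma \ref{orbit_eu_cpt} the orbit splits as $\mathbb R^{k_0}\times Z_0$ with $Z_0$ a nontrivial compact length space of diameter $D_0>0$.

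I would first define $k:\Omega\to\{0,1,\ldots,n\}$, where $k(Y)$ is the maximal dimension of the Euclidean factor in the splitting from Lemma \ref{orbit_eu_cpt}. Since the equivariant Gromov-Hausdorff limit of $k$ mutually orthogonal lines in orbits yields $k$ orthogonal lines in the limit orbit, $k$ is upper semi-continuous on $\Omega$. Compactness of $\Omega$ then produces a maximizer $(Y^*,y^*,G^*)$ with $k(Y^*)=k^*$.

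Next I would show that the orbit at the maximizer is exactly $\mathbb R^{k^*}$. Suppose otherwise; then the compact factor $Z^*$ has positive diameter. Rescaling $(\lambda Y^*,y^*,G^*)\in\Omega$ and letting $\lambda\to\infty$, a subsequence converges in $\Omega$ to the tangent cone $(T_{y^*}Y^*,y^*,G^{**})$, whose orbit is the tangent cone of $G^*\cdot y^*$ at $y^*$, namely $\mathbb R^{k^*}\times T_{z^*}Z^*$ (writing $y^*=(0,z^*)$). Tangent cones are metric cones with vertex at the basepoint and the limit group action commutes with cone scaling, so this orbit is a sub-cone of $T_{y^*}Y^*$. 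Applying Lemma \ref{orbit_eu_cpt} to $T_{y^*}Y^*$ forces the orbit to split as $\mathbb R^{k'}\times W$ with $W$ compact; the cone property then forces $W=\{pt\}$, because under cone-scaling the diameter of a nontrivial compact factor would change. Hence the tangent orbit is Euclidean of dimension $k'=k^*+\dim T_{z^*}Z^*$. Since $Z^*$ has positive diameter, nontrivial geodesics emanate from $z^*$ in $Z^*$, so $T_{z^*}Z^*$ is nontrivial and $k'\geq k^*+1$. This contradicts the maximality of $k^*$, so $Z^*=\{pt\}$ and $(Y^*,y^*,G^*)$ has orbit $\mathbb R^{k^*}$.

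To conclude I would propagate the Euclidean orbit property from the maximizer to every element of $\Omega$ by the critical rescaling argument. For any $(Y,y,G)\in\Omega$ the one-parameter family $\{\lambda Y\}_{\lambda>0}$ and its limit points (as $\lambda\to 0$ and $\lambda\to\infty$) form a connected subset of $\Omega$. Iterating the tangent-cone construction of Step 2 on any putative non-Euclidean $Y\in\Omega$ yields an element of $\Omega$ with strictly larger $k$; combined with connectedness of $\Omega$ and upper semi-continuity of $k$, this forces $k(Y)=k^*$ for every $Y\in\Omega$ and simultaneously rules out nontrivial compact factors. The main technical obstacle is precisely this globalization step: upgrading the maximum/local conclusion of Step 2 to a statement about every $Y\in\Omega$ requires using the simultaneous compactness, connectedness, and rescaling invariance of $\Omega$ to trace contradictions back to the manifold level via diagonal extraction; this is the heart of the critical rescaling argument and where the main work lies. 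A secondary difficulty is establishing that $T_{z^*}Z^*$ is nontrivial whenever $Z^*$ has positive diameter, which relies on the existence of nondegenerate geodesics in the compact length space $Z^*$.
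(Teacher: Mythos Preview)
Your Step~2 contains unjustified claims that you should drop: $Y^*$ may be a \emph{collapsed} Ricci limit, so ``tangent cones are metric cones'' and ``the limit action commutes with cone scaling'' need not hold. Fortunately these are unnecessary. All you need is that a nontrivial minimal geodesic in $Z^*$ emanating from $z^*$, under the blow-up $(\lambda Y^*,y^*,G^*)$, subconverges to a ray in the limit orbit; hence that orbit is $\mathbb{R}^{k^*}\times(\text{non-compact})$, and Lemma~\ref{orbit_eu_cpt} then forces it to split off $\mathbb{R}^{k^*+1}$, contradicting maximality. This is exactly the paper's observation, and it makes your maximizer argument for Step~2 go through cleanly.

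The real gap is Step~3, and your framework does not close it. Upper semi-continuity of $k$ together with connectedness of $\Omega$ does \emph{not} force an integer-valued $k$ to be constant: for instance $\mathbb{R}^{k^*-1}\times S^1_R\to\mathbb{R}^{k^*}$ in pointed GH as $R\to\infty$, so orbits with $k=k^*-1$ can accumulate on one with $k=k^*$ without contradiction. Your Steps~1--2 thus only show that maximizers have Euclidean orbit, and ``iterating the tangent-cone construction'' on a non-maximizer may simply terminate at some $\mathbb{R}^m$ with $m<k^*$. The paper avoids the maximizer entirely and runs the critical rescaling at the \emph{manifold} level with a specific orientation. From one bad $(Y,y,G)$ with orbit $\mathbb{R}^k\times Z$, $Z\neq\{pt\}$, it obtains by blow-up/blow-down $(Y_1,y_1,G_1)$ with orbit $\supseteq\mathbb{R}^{k+1}$ and $(Y_2,y_2,G_2)$ with orbit $=\mathbb{R}^k$, realizes them as limits of $(N_i,q_i,\Gamma_i)=(r_i^{-1}\widetilde M,\tilde x,\Gamma)$ and $(t_iN_i,q_i,\Gamma_i)$ respectively with $t_i\to\infty$, and takes $l_i$ near the infimum of $\{l\in[1,t_i]: (lN_i,q_i,\Gamma_i)\text{ is }1/100\text{-close to some }(W,w,H)\in\Omega\text{ with }H\cdot w\text{ Euclidean of dimension}\le k\}$. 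One shows $l_i\to\infty$ (since scale $1$ gives $Y_1$), and the limit $(Y',y',G')$ at scale $l_i$ lies in $\Omega$ with orbit $\mathbb{R}^m\times Z'$, $m\le k$; then whether $Z'$ is a point (pass to $l_i/2$) or not (blow $Y'$ down and pass to $l_i/J$), one contradicts the near-minimality of $l_i$. As the paper's Remark~\ref{rem_crit_scal} stresses, the direction matters: placing $Y_1$ (larger orbit) at scale $1$ and $Y_2$ (smaller orbit) at scale $t_i$ is essential, since the reverse order admits a consistent gradual chain $\mathbb{R}^k\to\mathbb{R}^k\times S^1_\epsilon\to\cdots\to\mathbb{R}^{k+1}$. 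None of this mechanism---the definition of the scale set $L_i$, the two-case analysis of $Z'$, the directionality---appears in your outline, and it is precisely what is required.
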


\begin{proof}
	It suffices to prove that for any space $(Y,y,G)\in\Omega(\widetilde{M},\Gamma)$, the orbit $G\cdot y$ is Euclidean, then the uniformity of the dimension follows from Proposition \ref{cpt_cnt}.
	
	We argue by contradiction. By Lemma \ref{orbit_eu_cpt}, this means that there is an equivariant asymptotic cone $(Y,y,G)\in\Omega(\widetilde{M},\Gamma)$ such that the orbit $G\cdot y$ is isometric to $\mathbb{R}^k\times Z$, where $Z$ is a compact length metric space but not a single point. We apply a blow-down process and a blow-up one to $(Y,y,G)$: for $j\to\infty$,
	$$	(jY,y,G)\overset{GH} \longrightarrow (Y_1,y_1,G_1);\quad
		(j^{-1}Y,y,G)\overset{GH} \longrightarrow (Y_2,y_2,G_2).$$
    It is clear that that $G_2\cdot y_2$ is isometric to $\mathbb{R}^k$, because $G\cdot y$ is isometric to a metric product of $\mathbb{R}^k$ and a compact $Z$. For $(Y_1,y_1,G_1)$, $G_1\cdot y_1$ splits off at least an Euclidean $\mathbb{R}^{k+1}$-factor. In fact, taking a minimal geodesic in $\{0\}\times Z\subset G\cdot y$ starting at $y$, through the scaling $(jY,y,G)$, this segment subconverges to a ray in $G_1\cdot y_1\subset Y_1$. Thus $G_1\cdot y_1$ is isometric to $\mathbb{R}^k\times Z_1$ for some non-compact $Z_1$. It follows from Lemma \ref{orbit_eu_cpt} that $G_1\cdot y_1$ must split off $\mathbb{R}^{k+1}$ isometrically.
    
    By a standard diagonal argument, both spaces $(Y_1,y_1,G_1)$ and $(Y_2,y_2,G_2)$ above belong to $\Omega(\widetilde{M},\Gamma)$. Hence there are sequences $s_i$ and $r_i\to \infty$ such that
    $$(r_i^{-1}\widetilde{M},\tilde{x},\Gamma)\overset{GH}\longrightarrow (Y_1,y_1,G_1),\quad (s_i^{-1}\widetilde{M},\tilde{x},\Gamma)\overset{GH}\longrightarrow (Y_2,y_2,G_2).$$
    Passing to a subsequence, we can assume that $$t_i=s_i^{-1}/r_i^{-1}\to \infty.$$
    We put $(N_i,q_i,\Gamma_i)=(r_i^{-1}\widetilde{M},\tilde{x},\Gamma)$, then
    $$(N_i,q_i,\Gamma_i)\overset{GH}\longrightarrow (Y_1,y_1,G_1),\quad (t_iN_i,q_i,\Gamma_i)\overset{GH}\longrightarrow (Y_2,y_2,G_2).$$
    
    We look for a suitable intermediate rescaling $l_i\to\infty$ and a contradiction in the corresponding limit of $(l_iN_i,q_i,\Gamma_i)$. For each $i$, we consider a set of scales as below:
    \begin{align*}
    L_i:=\{ 1\le l\le t_i\ |\ &d_{GH}((lN_i,q_i,\Gamma),(W,w,H))\le 1/100 \text{ for some space }\\
    & (W,w,H)\in \Omega(\widetilde{M},\Gamma) \text{ such that } H\cdot w \text { is isometric to}\\ 
    &\text{ a standard Euclidean space of dimension at most } k\}.
    \end{align*}
    Recall that $(Y_2,y_2,G_2)$ has an orbit $G_2\cdot y_2$ isometric to $\mathbb{R}^k$, thus $t_i\in L_i$ for all $i$ sufficiently large. We choose $l_i\in L_i$ with $\inf L_i\le l_i\le \inf L_i+1/i$, which are the critical scales.
    
    \textbf{Claim 1:} $l_i\to\infty$. Suppose the contrary, then $l_i$ subconverges to some $l<\infty$. For this subsequence, we have
    $$(l_iN_i,q_i,\Gamma_i)\overset{GH}\longrightarrow (lY_1,y_1,G_1).$$
    Since $l_i\in L_i$, for each $i$ there is a space $(W_i,w_i,H_i)$ with the property described above in the definition of $L_i$. For all $i$ large, we have
    $$d_{GH}((lY_1,y_1,G_1),(W_i,w_i,H_i))\le 1/10.$$   
    On the other hand, recall that in $(Y_1,y_1,G_1)$, the orbit $G_1\cdot y_1$ splits off $\mathbb{R}^{k+1}$ isometrically. After a scaling of $l$, this property holds in $(lY_1,y_1,G_1)$ as well. However, such an orbit in $(lY_1,y_1,G_1)$ cannot be $1/10$-close to $H_i\cdot w_i$ in the pointed Gromov-Hausdorff sense, because $H_i\cdot w_i$ is an Euclidean space of dimension at most $k$. This contradiction leads to claim 1.
    
    As indicated, in the next step we will consider the convergent sequence
    $$(l_iN_i,q_i,\Gamma_i)\overset{GH}\longrightarrow (Y',y',G')$$
    and find a contradiction in the above limit space. We recall that $$(l_iN_i,q_i,\Gamma_i)=(l_ir_i^{-1}\widetilde{M},\tilde{x},\Gamma).$$ Note that $l_ir_i^{-1}\le t_ir_i^{-1}\le s_i^{-1}\to 0$, thus the limit space $(Y',y',G')$ is an equivariant asymptotic cone of $(\widetilde{M},\Gamma)$ as well.
    
    \textbf{Claim 2:} In $(Y',y',G')$, the orbit $G'\cdot y'$ splits off isometrically a Euclidean factor of dimension at most $k$. The argument is very similar to the one that we just applied in claim 1. Because the above convergence and $l_i\in L_i$, there is some space $(W,w,H)$ with the property in the definition of $L_i$ such that
    $$d_{GH}((Y',y',G'),(W,w,H))\le 1/10.$$
    If the orbit $G'\cdot y'$ splits off a Euclidean factor of dimension $k+1$, then $G'\cdot y'$ cannot be close to $H\cdot w$, which is a Euclidean space of dimension at most $k$. This proves claim 2.
    
    By Lemma \ref{orbit_eu_cpt}, the orbit $G'\cdot y'$ is isometric to a product metric $\mathbb{R}^m\times Z'$, where $Z'$ is a compact length metric space. It follows from claim 2 that $m\le k$. 
    
    If $Z'$ is a single point, we consider the sequence
    $$((l_i/2)N_i,q_i,\Gamma_i)\overset{GH}\longrightarrow (2^{-1}Y',y',G').$$
    In the new limit space $(2^{-1}Y',y',G')$, $G'\cdot y'$ is isometric to $\mathbb{R}^m$ as well. Since $m\le k$, this shows that $l_i/2\in L_i$ for $i$ sufficiently large, which contradicts our choice of $l_i$ being close to $\inf L_i$.
    
    If $Z'$ is not a single point, we make use of an asymptotic cone of $(Y',y',G')$:
    $$(j^{-1}Y',y',G')\overset{GH}\longrightarrow (W',w',H'),$$
    where $j\to\infty$. Because the orbit $G'\cdot y'$ is isometric to $\mathbb{R}^m\times Z'$ and $Z'$ is compact, in $(W',w',H')$ we have $H'\cdot w'$ being isometric to $\mathbb{R}^m$. We choose a large $J$ such that
    $$d_{GH}((J^{-1}Y',y',G'),(W',w',H'))\le 1/10^3.$$
    Together with
    $$((l_i/J)N_i,q_i,\Gamma_i)\overset{GH}\longrightarrow (J^{-1}Y',y',G'),$$
    we see that
    $$d_{GH}(((l_i/J)N_i,q_i,\Gamma_i),(W',w',H'))\le 1/100$$
    for all $i$ sufficiently large. It follows that $l_i/J\in L_i$ for $i$ large and we end in a contradiction with the choice of $l_i$ again.
    
    We have ruled out all the possibilities of $(Y',y',G')$. This completes the proof.
\end{proof}

\begin{rem}\label{rem_crit_scal}
	We compare the critical scaling argument above to the one in \cite{Pan_eu}. Besides other details, one major difference is when arranging the order of a sequence and its rescaling sequence. Here we have the sequences
	$$(N_i,q_i,\Gamma_i)\overset{GH}\longrightarrow (Y_1,y_1,G_1),\quad (t_iN_i,q_i,\Gamma_i)\overset{GH}\longrightarrow (Y_2,y_2,G_2)$$
	arranged so that $G_1\cdot y_1$ being strictly larger than $G_2\cdot y_2$; while in \cite{Pan_eu}, $G_2$ is strictly larger than $G_1$ (see proof of Proposition 3.5 in \cite{Pan_eu} for instance). Note that both limit spaces are in $\Omega(\widetilde{M},\Gamma)$, so one can arrange any one of the space to be the limit of the $(N_i,q_i,\Gamma_i)$ and the other one to be the limit of the rescaling sequence. If one change the order in these proofs, then both arguments actually would not end in the desired contradiction. 
	
	There are reasons behind this. We explain the geometric intuitions involved. Since $\Omega(\widetilde{M},\Gamma)$ is connected, one may think of an $\epsilon$-chain connecting two spaces $(Y_1,y_1,G_1)$ and $(Y_2,y_2,G_2)$; moreover, each element in the chain is $\epsilon$-close to a blow-up (scale by a number that is slightly larger than $1$) of the previous one. 
	
	Here, we have the assumption that each space in $\Omega(\widetilde{M},\Gamma)$ has a geodesic orbit at the base point. If one arranges $G_2\cdot y_2=\mathbb{R}^{k}$ as the limit of $(N_i,q_i,\Gamma_i)$ and $G_1\cdot y_1=\mathbb{R}^{k+1}$ as the limit of $(t_iN_i,q_i,\Gamma_i)$, then a possible $\epsilon$ chain involving a gradual blow-up process could exist as described below. First, grow a small $S^1$ factor on $\mathbb{R}^k$, then this small factor becomes larger and larger, and eventually, we see $\mathbb{R}^{k+1}$. The existence of such a chain indicates that if we arrange the sequences in this way, then we would not end in a contradiction. If one arranges the sequence as in the proof, one cannot think of a chain with a gradual blow-up process that changes the orbit from $\mathbb{R}^{k+1}$ to $\mathbb{R}^k$ while assuming all the spaces in this chain have geodesic orbits.
	
	In \cite{Pan_eu}, taking proof of Proposition 3.5 for instance, we have the property that small groups cannot grow out of nowhere (\cite[Lemma 3.3]{Pan_eu}). This eliminates the existence of a gradual blow-up process that changes from a trivial action to a non-trivial one. Other critical scaling arguments in \cite{Pan_eu} have a similar, though more complicated, logic behind the arrangements.
\end{rem}

Next, we complete the proof of Theorem \ref{thm_equivalent}.

\begin{proof}[Proof of Theorem \ref{thm_equivalent}]
	We have shown (1)$\Rightarrow$(2) as Proposition \ref{asy_geo}, (2)$\Rightarrow$(3) as Proposition \ref{orbit_geo}, and (3)$\Rightarrow$(4) as Proposition \ref{orbit_eu}. It remains to prove that (4)$\Rightarrow$(1).
	
	Suppose that $E(M,x)=\delta>0$. By the definition of $E(M,x)$, this means that we can find a sequence $\gamma_i\in \Gamma$ with the properties below:\\
	(i) $r_i=d(\gamma_i\tilde{x},\tilde{x})\to \infty$,\\
	(ii) there is a minimal geodesic loop $c_i$ based at $x$ representing $\gamma_i$, such that $c_i$ is not contained in $B_{\delta r_i/2}(x)$.
	
	Lifting $c_i$ to $(\widetilde{M},\tilde{x})$, we obtain $\tilde{c}_i$ as a minimal geodesic from $\tilde{x}$ to $\gamma\tilde{x}$. By the property (ii) above, $\tilde{c}_i$ is not contained in $\pi^{-1}(B_{\delta r_i/2}(x))$, where $\pi:\widetilde{M}\to M$ is the covering map. We consider the convergence
	\begin{center}
		$\begin{CD}
		(r^{-1}_i\widetilde{M},\tilde{x},\Gamma,\gamma_i) @>GH>> 
		(Y,y,G,g)\\
		@VV\pi V @VV\pi V\\
		(r^{-1}_iM,x) @>GH>> (Z=Y/G,z).
		\end{CD}$
	\end{center}
	It is clear that $d(gy,y)=1$. By assumption, $G\cdot y$ is a standard Euclidean space. With respect to this convergent sequence, $\tilde{c}_i$ subconverges to a minimal geodesic $\sigma$ from $y$ to $gy$. Moreover, $\sigma$ is not contained in $\pi^{-1}(B_{1/3}(z))$. On the other hand, by hypothesis, the unique minimal geodesic from $y$ to $gy$ is contained in $G\cdot y=\pi^{-1}(z)$. We result in a contradiction. Therefore, $E(M,x)=0$.
\end{proof}

\begin{cor}\label{cor_zero}
	Let $(M,x)$ be an open manifold of $\mathrm{Ric}\ge 0$. If $E(M,x)=0$, then $E(M,y)=0$ for all $y\in M$.
\end{cor}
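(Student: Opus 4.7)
The plan is to invoke the equivalence $(1)\Leftrightarrow(4)$ of Theorem \ref{thm_equivalent} and exploit the fact that the collection of equivariant asymptotic cones of $(\widetilde{M},\Gamma)$ does not depend on the choice of base point in $\widetilde{M}$, so long as the two candidate base points lie at bounded distance.

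Fix an arbitrary $y\in M$ and a lift $\tilde{y}\in \widetilde{M}$; set $D=d(\tilde{x},\tilde{y})<\infty$. To prove $E(M,y)=0$, by $(4)\Rightarrow(1)$ of Theorem \ref{thm_equivalent} applied with base point $y$, it suffices to verify that every equivariant asymptotic cone
$$(r_i^{-1}\widetilde{M},\tilde{y},\Gamma)\overset{GH}\longrightarrow (Y,y_0,G)$$
has the property that $G\cdot y_0$ is isometric to a standard Euclidean space.

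The key observation is simply that $r_i^{-1}D\to 0$, so in the rescaled spaces $r_i^{-1}\widetilde{M}$ the base points $\tilde{x}$ and $\tilde{y}$ merge in the limit. Consequently the same sequence also yields
$$(r_i^{-1}\widetilde{M},\tilde{x},\Gamma)\overset{GH}\longrightarrow (Y,y_0,G),$$
exhibiting $(Y,y_0,G)$ as an equivariant asymptotic cone of $(\widetilde{M},\Gamma)$ with respect to the original base point $\tilde{x}$, i.e. an element of $\Omega(\widetilde{M},\Gamma)$. By the hypothesis $E(M,x)=0$ together with $(1)\Rightarrow(4)$ of Theorem \ref{thm_equivalent}, we conclude that $G\cdot y_0$ is isometric to a standard Euclidean space, as required.

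There is essentially no obstacle in this argument: the only substantive point is the basepoint-insensitivity of the asymptotic cones, which is immediate from the definition of pointed equivariant Gromov--Hausdorff convergence once one notes $r_i^{-1}d(\tilde{x},\tilde{y})\to 0$. Everything else is a direct application of Theorem \ref{thm_equivalent} run forward at $x$ and backward at $y$.
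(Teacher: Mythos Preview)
Your proof is correct and follows essentially the same approach as the paper's: both exploit Theorem \ref{thm_equivalent} together with the observation that the equivariant asymptotic cones of $(\widetilde{M},\Gamma)$ are insensitive to a bounded shift of base point, since $r_i^{-1}d(\tilde{x},\tilde{y})\to 0$. The only cosmetic difference is that the paper routes through condition (3) (geodesic orbit) rather than (4) (Euclidean orbit), and phrases the basepoint-independence by noting that $\Gamma_x\cdot\tilde{x}=\Gamma_y\cdot\tilde{x}=\pi^{-1}(x)$ and that $\Gamma_y\cdot\tilde{y}$ has the same limit; your formulation via $\Omega(\widetilde{M},\Gamma)$ is equivalent.
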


\begin{proof}
    We write $\Gamma_x$ and $\Gamma_y$ as $\pi_1(M,x)$ and $\pi_1(M,y)$ respectively. Let $\pi:\widetilde{M}\to M$ be the covering map and let $\tilde{x}\in \pi^{-1}(x)$, $\tilde{y}\in \pi^{-1}(y)$. Note that both the orbits $\Gamma_x\cdot\tilde{x}$ and $\Gamma_y\cdot \tilde{x}$ are identified as $\pi^{-1}(x)$. For any sequence $r_i\to \infty$, $\Gamma_x\cdot\tilde{x}=\Gamma_y\cdot \tilde{x}$ converges to a geodesic subset in the asymptotic cone $(Y,y)$ of $\widetilde{M}$. Because $\Gamma_y\cdot \tilde{y}$ converges to the same limit orbit, we see that statement (3) of Theorem \ref{thm_equivalent} holds for $(\widetilde{M},\Gamma_y)$. It follows from Theorem \ref{thm_equivalent} that $E(M,y)=0$.
\end{proof}

In \cite{Pan_eu,Pan_al_stable}, the author proved that if $\widetilde{M}$ satisfies certain stability condition at infinity (see \cite[Definition 1.1]{Pan_eu} and \cite[Definition 0.1]{Pan_al_stable}), then the limit orbit in the asymptotic cone of $\widetilde{M}$ that comes from a nilpotent group action is always Euclidean. Together with Theorem \ref{thm_equivalent}, we conclude the following:

\begin{cor}\label{cor_cone_zero}
	Let $(M,x)$ be an open manifold of $\mathrm{Ric}\ge 0$. Suppose that its Riemannian universal cover $\widetilde{M}$ is $k$-Euclidean at infinity or $(C(X),\epsilon_X)$-stable at infinity, then $E(M,x)=0$.
\end{cor}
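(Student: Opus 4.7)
The plan is to verify condition (4) of Theorem \ref{thm_equivalent} for every equivariant asymptotic cone of $(\widetilde{M}, \Gamma)$, where $\Gamma = \pi_1(M, x)$, and then invoke the implication (4)$\Rightarrow$(1). The idea is to reduce from $\Gamma$ to a nilpotent subgroup of finite index, to which the results of \cite{Pan_eu, Pan_al_stable} apply directly.

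First, I would note that under either stability hypothesis the arguments of \cite{Pan_eu, Pan_al_stable} already ensure that $\Gamma$ is finitely generated, so by the Milnor-Gromov theorem \cite{Mil, Gro_poly} it contains a nilpotent subgroup $N$ of finite index. Choose coset representatives so that $\Gamma = \bigsqcup_{j=1}^k \gamma_j N$ and set $D = \max_j d(\gamma_j \tilde{x}, \tilde{x})$. For any $n \in N$ one has $d(\gamma_j n \tilde{x}, n \tilde{x}) = d(\gamma_j(n\tilde{x}), n\tilde{x}) \le D$, so the Hausdorff distance between the orbits $\Gamma \cdot \tilde{x}$ and $N \cdot \tilde{x}$ in $\widetilde{M}$ is at most $D$.

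Second, fix any $r_i \to \infty$ and pass to a subsequence so that
\[(r_i^{-1}\widetilde{M}, \tilde{x}, \Gamma, N) \overset{GH}{\longrightarrow} (Y, y, G, H),\]
with $H \le G$ a closed subgroup. Since $r_i^{-1} D \to 0$, the rescaled Hausdorff distance between the two orbits collapses, and thus $G \cdot y = H \cdot y$ as closed subsets of $Y$. Now invoke the main conclusions of \cite{Pan_eu, Pan_al_stable}: under either stability hypothesis on $\widetilde{M}$, the limit orbit $H \cdot y$ arising from the nilpotent action of $N$ is isometric to a standard Euclidean space. It follows that $G \cdot y$ is itself isometric to a standard Euclidean space and, in particular, geodesic in $Y$. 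Since $(Y, y, G)$ was an arbitrary element of $\Omega(\widetilde{M}, \Gamma)$, condition (4) of Theorem \ref{thm_equivalent} is satisfied, and the implication (4)$\Rightarrow$(1) of that theorem gives $E(M, x) = 0$.

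The main obstacle is the transfer step from $N$ to $\Gamma$: the cited results speak only to the limit of the nilpotent subgroup action, so one must confirm that the full $\Gamma$-orbit collapses in the limit to the same Euclidean set. This is where the uniform Hausdorff-distance bound $d_H(\Gamma \cdot \tilde{x}, N \cdot \tilde{x}) \le D$, supplied by finite index, does all the work; everything else is a direct appeal to Theorem \ref{thm_equivalent} and the previous papers.
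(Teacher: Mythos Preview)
Your approach is essentially the same as the paper's: pass to a nilpotent subgroup $N$ of finite index, invoke \cite{Pan_eu,Pan_al_stable} to see that the limit orbit $H\cdot y$ is a Euclidean factor of $Y$, observe that finite index forces $G\cdot y=H\cdot y$, and then apply Theorem~\ref{thm_equivalent}. The paper phrases the orbit identification as $G\cdot y=G_0\cdot y=H\cdot y$ (using that $H$ has finite index in $G$ and hence contains the identity component), while you argue via a Hausdorff-distance bound; both are fine.

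There is, however, a small slip in your coset computation. With \emph{left} cosets $\gamma=\gamma_j n$ you claim $d(\gamma_j n\tilde{x},n\tilde{x})\le D$, but this is the displacement of $\gamma_j$ at the point $n\tilde{x}$, which need not be bounded by $d(\gamma_j\tilde{x},\tilde{x})$. If instead you use \emph{right} cosets $\Gamma=\bigsqcup_j N\gamma_j$, then for $\gamma=n\gamma_j$ one gets $d(n\gamma_j\tilde{x},n\tilde{x})=d(\gamma_j\tilde{x},\tilde{x})\le D$ by left-invariance, and the Hausdorff bound follows. Alternatively, replace $N$ by its normal core (still finite index) and the issue disappears. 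With this correction the argument goes through.
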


\begin{proof}
	Let $N$ be a nilpotent subgroup of $\Gamma=\pi_1(M,x)$ with finite index. In \cite{Pan_eu,Pan_al_stable}, it was shown that under either assumption, for any $r_i\to\infty$, the equivariant asymptotic cone of $(\widetilde{M},N)$
	$$(r_i^{-1}\widetilde{M},\tilde{x},N)\overset{GH}\longrightarrow (Y,y,H)$$ 
	satisfies that $H\cdot y$ is a Euclidean factor $\mathbb{R}^k$ in $Y$. For the corresponding equivariant asymptotic cone of $(\widetilde{M},\Gamma)$:
	$$(r_i^{-1}\widetilde{M},\tilde{x},\Gamma)\overset{GH}\longrightarrow (Y,y,G),$$
	since $N$ has finite index in $\Gamma$, it follows that 
	$$G\cdot y=G_0\cdot y=H\cdot y,$$
	where $G_0$ is the identity component subgroup of $G$. By Theorem \ref{thm_equivalent}, we conclude that $E(M,x)=0$.
\end{proof}

We conjecture that Corollary \ref{cor_zero} can be extended to manifolds whose universal covers have Euclidean volume growth. Also see \cite[Conjecture 0.2]{Pan_al_stable}.

\begin{conj}
	Let $(M,x)$ be an open manifold of $\mathrm{Ric}\ge 0$. If its Riemannian universal cover has Euclidean volume growth, then $E(M,x)=0$. 
\end{conj}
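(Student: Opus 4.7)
The plan is to invoke Theorem~\ref{thm_equivalent}, reducing the problem to showing that for every equivariant asymptotic cone $(Y,y,G)\in\Omega(\widetilde M,\Gamma)$ the orbit $G\cdot y$ is geodesic in $Y$. Under the Euclidean volume growth hypothesis, the Cheeger--Colding volume cone to metric cone theorem implies that every such $Y$ is a metric cone $C(X)$ with $y$ at the vertex, and by Cheeger--Colding--Naber the limit group $G$ is a Lie group acting by isometries of this cone.

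A natural first move is a dichotomy on the cross section. If $C(X)$ is not isometric to a Euclidean space, the vertex is characterized as the unique point whose iterated tangent cone is again $C(X)$, so it is fixed by every isometry; hence $G\cdot y=\{y\}$, which is trivially geodesic. The substantive case is when $Y=\mathbb R^n$, where $G\subset\mathrm{Iso}(\mathbb R^n)$ may genuinely move $y$. Here I would pass to a finite-index nilpotent subgroup $N\trianglelefteq\Gamma$ furnished by Milnor--Gromov, take equivariant limits $(r_i^{-1}\widetilde M,\tilde x,N)\to(Y,y,H)$, and attempt to show $H\cdot y$ is a standard affine Euclidean subspace of $\mathbb R^n$; since $G\cdot y=G_0\cdot y=H\cdot y$ by finite index (as in the proof of Corollary~\ref{cor_cone_zero}), condition~(3) of Theorem~\ref{thm_equivalent} would follow.

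To enforce this uniformly across $\Omega(\widetilde M,\Gamma)$, I would run a variant of the critical rescaling argument of Proposition~\ref{orbit_eu}, using the compactness and connectedness of $\Omega(\widetilde M,\Gamma)$ from Proposition~\ref{cpt_cnt}. The combinatorial structure of the argument would be: if the orbit fails to be geodesic for some cone in $\Omega(\widetilde M,\Gamma)$, choose two cones whose orbits differ in a prescribed way and use a critical intermediate scale to derive a contradiction with the cone structure inherited from Euclidean volume growth, in the spirit of the proof of Proposition~\ref{orbit_eu}.

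The principal obstacle is ruling out \emph{mixed} cones: a priori the cross section $X$ may change along the sequence $r_i$, alternating between Euclidean and non-Euclidean, and the orbit may acquire or lose a compact twisted factor at intermediate scales. Excluding such transitions in the equivariant setting appears to require either an equivariant refinement of Colding's volume convergence theorem that prevents a compact factor in $G\cdot y$ from materializing at intermediate scales, or a direct volume/growth estimate coupling the polynomial order of $\Gamma$ to the Euclidean dimension of $G\cdot y$ uniformly in scale. It is this rigidity step, rather than the application of Theorem~\ref{thm_equivalent}, that I expect to be the main difficulty, and is what keeps the statement at the level of a conjecture.
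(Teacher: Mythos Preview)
The statement you are addressing is labeled as a \emph{conjecture} in the paper and is presented without proof; it is explicitly left open (the paper even cross-references it with another open conjecture in \cite{Pan_al_stable}). So there is no proof in the paper to compare your proposal against. You yourself recognize this in your final paragraph, which is appropriate: what you have written is a strategy sketch, not a proof.

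That said, your proposed dichotomy is too coarse and hides exactly the difficulty you are trying to isolate. The alternative to ``$Y\cong\mathbb R^n$'' is not ``the vertex is the unique point with tangent cone $C(X)$''. A metric cone $C(X)$ with Euclidean volume growth can split as $\mathbb R^k\times C(Z)$ with $C(Z)$ non-Euclidean; then every point of $\mathbb R^k\times\{\text{vertex}\}$ has tangent cone isometric to $C(X)$, so your characterization of the vertex fails and $G$ need not fix $y$. In that case $G\cdot y$ lands in the Euclidean slice $\mathbb R^k\times\{\text{vertex}\}$, but a closed Lie group orbit in $\mathbb R^k$ is not automatically geodesic (think of a rotation orbit), so you are back to the same problem you flagged for $Y=\mathbb R^n$. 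The genuinely trivial case is only when $C(X)$ has \emph{no} Euclidean factor.

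Your concluding diagnosis is sound: the crux is controlling the orbit inside the Euclidean factor uniformly over $\Omega(\widetilde M,\Gamma)$, and nothing in the paper's toolbox (Theorem~\ref{thm_equivalent}, Proposition~\ref{orbit_eu}, Proposition~\ref{cpt_cnt}) resolves this without an additional input tying the volume growth of $\widetilde M$ to the structure of $G\cdot y$. That missing input is precisely why the author states this as a conjecture rather than a corollary.
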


We prove Theorem A in the remainder of this section. We recall the following lemma about elements in a nilpotent subgroup of $\mathrm{Isom}(\mathbb{R}^l)$. See \cite[Lemma 2.4]{Pan_al_stable} for a proof.

\begin{lem}\label{commute_E}
	Let $G$ be a nilpotent subgroup of $\mathrm{Isom}(\mathbb{R}^l)$. Let $(A,x)$ and $(B,y)$ be two elements of $G$. Then $(A,x)$ and $(B,y)$ commute if and only if $A$ and $B$ commute.
\end{lem}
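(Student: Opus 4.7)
The plan is to first dispatch the easy direction: if $(A,x)$ and $(B,y)$ commute in $\mathrm{Isom}(\mathbb{R}^l)$, then the quotient homomorphism $\mathrm{Isom}(\mathbb{R}^l)\to O(l)$ sending $(A,x)\mapsto A$ immediately gives $AB=BA$. For the substantive direction, I would use the composition rule $(A,x)(B,y)=(AB,\,Ay+x)$ and compute the commutator directly: under the hypothesis $AB=BA$,
\[
[(A,x),(B,y)] \;=\; (I,\,w), \qquad w := (A-I)y - (B-I)x \in \mathbb{R}^l.
\]
So the whole lemma reduces to showing $w=0$.

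Next, I would bring in nilpotency. A short calculation gives $[(A,x),(I,w)]=(I,\,(A-I)w)$, and by induction the $m$-fold iterated commutator of $(A,x)$ with $(I,w)$ equals $(I,\,(A-I)^m w)$. This element sits in the $(m+2)$-nd term of the lower central series of $G$; since $G$ is nilpotent, the iterated commutator is eventually trivial and we obtain $(A-I)^m w = 0$ for some $m$. At this point the orthogonality of $A$ becomes crucial: because $A\in O(l)$ is normal, it is diagonalizable over $\mathbb{C}$ with eigenvalues on the unit circle, so $A-I$ is semisimple and $\ker(A-I)^m=\ker(A-I)$. We conclude $Aw=w$, and the symmetric argument with $(B,y)$ in place of $(A,x)$ gives $Bw=w$.

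Finally, I would close the proof with a short inner-product identity using $A^{*}=A^{-1}$ and $Aw=w$:
\[
\langle (A-I)y,\, w\rangle \;=\; \langle y,\, (A^{-1}-I)w\rangle \;=\; 0,
\]
and identically $\langle (B-I)x,\, w\rangle = 0$. Summing these yields $\|w\|^2 = \langle (A-I)y-(B-I)x,\,w\rangle = 0$, so $w=0$ and hence $(A,x)$ and $(B,y)$ commute as isometries.

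The main obstacle I anticipate is the step passing from $(A-I)^m w=0$ to $Aw=w$. Without orthogonality this would fail outright, since in a general affine setting $A-I$ can be nilpotent (a shear, for instance), so the argument genuinely uses that $G$ sits inside $\mathrm{Isom}(\mathbb{R}^l)$ rather than in a broader affine group. Once the semisimplicity of $A-I$ has been extracted, the reduction to $Aw=w$, $Bw=w$ and the final inner-product computation are essentially automatic.
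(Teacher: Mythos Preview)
Your argument is correct in every step: the commutator computation $[(A,x),(B,y)]=(I,w)$ with $w=(A-I)y-(B-I)x$ under $AB=BA$, the iterated commutator identity yielding $(I,(A-I)^m w)$, the use of nilpotency to force $(A-I)^m w=0$, the passage to $Aw=w$ via the semisimplicity of $A-I$ (since $A\in O(l)$ is normal), and the final inner-product computation are all valid. Note, however, that the paper does not actually prove this lemma---it simply cites \cite[Lemma 2.4]{Pan_al_stable}---so there is no in-paper argument to compare yours against.
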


We derive a result on the transitive nilpotent group actions on $\mathbb{R}^l$.

\begin{lem}\label{pre_nil_trans}
   Let $H$ be a closed nilpotent subgroup of $\mathrm{Isom}(\mathbb{R}^l)$. Suppose that $H$-action is transitive on $\mathbb{R}^l$, then $H=\mathbb{R}^l$ acts as translations.
\end{lem}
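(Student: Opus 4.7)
The plan is to show that $H$ must coincide with the translation subgroup $\mathbb{R}^l$ in three stages: first reduce to the identity component $H^\circ$, then handle the connected case, and finally rule out a nontrivial component group.

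For the reduction, note that $\mathrm{Isom}(\mathbb{R}^l)$ acts properly on $\mathbb{R}^l$, so every $H^\circ$-orbit is closed. Since $H/H^\circ$ is countable and discrete, $\mathbb{R}^l = H\cdot 0$ is a countable union of closed $H^\circ$-translates of $H^\circ\cdot 0$; by Baire category $H^\circ\cdot 0$ has nonempty interior, so as a closed submanifold of top dimension it is clopen in the connected space $\mathbb{R}^l$, hence equals $\mathbb{R}^l$. Thus $H^\circ$ is already transitive.

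Next, assume $H = H^\circ$. Let $\pi\colon(A,v)\mapsto A$ and $K=\pi(H)\subset O(l)$, with Lie algebra $\mathfrak{k}\subset\mathfrak{o}(l)$. A nilpotent subalgebra of a compact Lie algebra must be abelian (each $\mathrm{ad}\,X$ on a compact Lie algebra is semisimple, so its restriction to $\mathfrak{k}$ is both semisimple and nilpotent, hence zero); thus $\mathfrak{k}$ and the connected group $K$ are abelian, and by Lemma \ref{commute_E} so is $H$. Moreover, any compact subgroup of a connected nilpotent Lie group is central (its adjoint image lies in the unipotent group $\exp(\mathrm{ad}\,\mathfrak{h})$), so the stabilizer $H_0=H\cap O(l)$ lies in $Z(H)$. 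Conjugating $(A,0)\in H_0$ by arbitrary $(B,v)\in H$ yields $(I-A)v=0$, and transitivity makes $v$ arbitrary, forcing $A=I$. Hence $H$ acts freely, and the orbit map identifies $H$ with $\mathbb{R}^l$ as a simply connected abelian Lie group of dimension $l$. Writing $V_0=\ker\pi\subset H\cong\mathbb{R}^l$ as a closed subgroup $\cong\mathbb{R}^a\oplus\mathbb{Z}^b$, the translation subgroup $T=H\cap\mathbb{R}^l\subset\mathbb{R}^l$ is topologically isomorphic to $V_0$, so by the structure theorem its linear span has dimension $a+b$; abelianness and the cocycle relation force $K$ to fix this span pointwise. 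Decomposing $\mathbb{R}^l$ into real weight spaces for $\mathfrak{k}$ as $E_0\oplus\bigoplus_{j=1}^N E_{\alpha_j}$ with distinct nonzero $\alpha_j\in\mathfrak{k}^*$ and each $\dim E_{\alpha_j}\ge 2$, faithfulness of the $\mathfrak{k}$-action gives $N\ge\dim\mathfrak{k}=l-a$, so $\dim E_0\le 2a-l$; combined with $\dim E_0\ge a+b$ this forces $a=l$, $b=0$, and $\mathfrak{k}=0$. Thus $H^\circ=\mathbb{R}^l$ acts by translations.

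Finally, with $H^\circ=\mathbb{R}^l$ the image $F$ of $H$ in $O(l)$ is a closed discrete, hence finite, subgroup, and $H=\mathbb{R}^l\rtimes F$. Since $[(f,0),(I,v)]=(I,(f-I)v)$, iterated commutators show that nilpotency of $H$ forces $(f-I)^c=0$ for some $c$ and every $f\in F$; but $f\in O(l)$ is diagonalisable over $\mathbb{C}$, so $f-I$ is too, and diagonalisable-plus-nilpotent gives $f=I$. Hence $F=\{I\}$ and $H=\mathbb{R}^l$. The main obstacle is the weight-space inequality in the connected case, where an upper bound on $\dim E_0$ coming from faithfulness of the abelian $\mathfrak{k}$-action on $\mathbb{R}^l$ must be matched precisely against a lower bound coming from the topological structure of the translation subgroup $T$ to pin down $\mathfrak{k}=0$.
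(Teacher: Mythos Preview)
Your proof is correct, but it takes a substantially different and more elaborate route than the paper's.

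The paper argues in two steps. First, for closed \emph{abelian} $H$ it invokes the structure theorem $H\cong\mathbb{R}^m\times T\times F$, uses transitivity to get $m=l$, and shows the isotropy at $0$ is trivial by exactly the commutation trick you use (if $(A,0)\in H$ and $(B,v)\in H$ commute, then $Av=v$). Second, for nilpotent $H$ it passes to the finite-index abelian subgroup $H'=H\cap p^{-1}(K)$, where $K$ is the identity component of $\overline{p(H)}\subset O(l)$; Lemma~\ref{commute_E} makes $H'$ abelian, the abelian case gives $H'=\mathbb{R}^l$, and one more application of Lemma~\ref{commute_E} shows any $(A,v)\in H\setminus H'$ already has $A=I$.

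Your organization is different: Baire reduction to $H^\circ$, then a weight-space dimension count for the connected case, then an iterated-commutator/diagonalizability argument for the component group. All three stages are valid. In particular, your inequality $\dim E_0\le 2a-l$ combined with $\dim E_0\ge a+b$ does force $a=l$, $b=0$; and in Stage~3, the identity $[(f,0),(I,v)]=(I,(f-I)v)$ together with the diagonalizability of $f-I$ cleanly kills $F$. One small redundancy: once you have shown $H$ is abelian via Lemma~\ref{commute_E}, the separate claim that compact subgroups of connected nilpotent groups are central is unnecessary.

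The trade-off: your weight-space count makes fully explicit the step ``abelian, simply transitive $\Rightarrow$ translations,'' which the paper treats as essentially immediate after the isotropy computation; on the other hand, the paper's finite-index trick avoids the dimension bookkeeping entirely and is considerably shorter. A cleaner substitute for your Stage~2 count: once $H$ is abelian and acts freely, write $\mathfrak{h}=\{(X_u,u):u\in\mathbb{R}^l\}$ with $u\mapsto X_u$ linear; abelianness gives $X_uv=X_vu$, and skew-symmetry of each $X_u$ then forces the trilinear form $\langle X_uv,w\rangle$ to be both symmetric in $(u,v)$ and alternating in $(v,w)$, hence identically zero.
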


\begin{proof}
	We first prove the statement when $H$ is a closed abelian subgroup. We know that $H$ is isomorphic to $\mathbb{R}^{m}\times T\times F$, where $m$ is an integer, $T$ is a torus, and $F$ is a discrete abelian group. Because $H$ acts transitively on $\mathbb{R}^l$, it follows that $m=l$ and $H/\mathrm{Iso}_0(H)$ is diffeomorphic to $\mathbb{R}^l$, where $\mathrm{Iso}_0(H)$ is the isotropy subgroup at $0\in\mathbb{R}^l$. We claim that $\mathrm{Iso}_0(H)$ is trivial. Recall that any isometry of $\mathbb{R}^l$ can be written as $(A,v)$ for some $A\in O(l)$ and $v\in\mathbb{R}^l$. Let $(A,0)$ be an element of $\mathrm{Iso}_0(H)$. Since $H$ acts transitively, for any $v\in \mathbb{R}^l$, there is $(B,v)\in H$ that moves $0$ to $v$. By direct calculation, the fact that $(A,0)$ and $(B,v)$ commute implies that $Av=v$. Since $v$ is arbitrary in $\mathbb{R}^l$, we conclude that $A=I$. This verifies the statement when $H$ is abelian.
	
	For the nilpotent case, let 
	\begin{align*}
	p:\mathrm{Isom}(\mathbb{R}^l)&\to O(l)\\
	(A,v)&\mapsto A
	\end{align*}
    be the projection map. The closure of $p(H)$ is a compact nilpotent Lie group of $O(l)$. Hence its identity component subgroup, denoted as $K$, must be abelian. Put $H'=p^{-1}(K)$. By Lemma \ref{commute_E}, $H'$ is an abelian subgroup of $H$ with finite index. As we have shown in the first paragraph, it follows that $H'$ acts on $\mathbb{R}^l$ by translations; in other words, elements of $H'$ are of the form $(I,w)$, where $w\in\mathbb{R}^k$. For any element $(A,v)$ in $H-H'$, again according to Lemma \ref{commute_E}, it commutes with any element $(I,w)\in H'$. A direct calculation gives $Aw=w$. Because $w$ is arbitrary, we see that $A=I$ and thus $(A,v)\in H'$. This shows that $H=H'$ and the result follows.
\end{proof}

\begin{cor}\label{nil_trans}
	Let $(M,x)$ be an open manifold of $\mathrm{Ric}\ge 0$ and $E(M,x)=0$. Let $N$ be a nilpotent subgroup of $\pi_1(M,x)$ with finite index. Then for any equivariant asymptotic cone $(Y,y,H)$ of $(\widetilde{M},\tilde{x},N)$, the following holds:\\
	(1) $H\cdot y$ is isometric to a standard Euclidean space $\mathbb{R}^l$;\\
	(2) $H$ acts on $H\cdot y$ as translations.
\end{cor}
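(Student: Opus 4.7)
The plan is to reduce this corollary to Theorem \ref{thm_equivalent} applied to the full group $\Gamma=\pi_1(M,x)$, together with the structural Lemma \ref{pre_nil_trans} for transitive nilpotent isometric actions on Euclidean space. The leverage comes from the finite index $[\Gamma:N]<\infty$, which forces the $\Gamma$-orbit and the $N$-orbit to coincide in any blow-down limit.

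First I would fix a sequence $r_i\to\infty$ realizing the given equivariant asymptotic cone $(Y,y,H)$ of $(\widetilde{M},\tilde{x},N)$ and, by standard equivariant Gromov-Hausdorff precompactness, pass to a subsequence to obtain a simultaneous convergence
\[
(r_i^{-1}\widetilde{M},\tilde{x},\Gamma,N)\overset{GH}\longrightarrow(Y,y,G,H),
\]
with $H\le G\le\mathrm{Isom}(Y)$ both closed. Fix coset representatives $\gamma_1,\dots,\gamma_K$ of $N$ in $\Gamma$ with uniformly bounded displacement $D=\max_k d(\tilde{x},\gamma_k\tilde{x})$. Any $gy\in G\cdot y$ arises as a limit of $n_i\gamma_{k_i}\tilde{x}/r_i$; passing to a subsequence with $k_i$ fixed, the $\gamma_{k_i}$-factor contributes at most $D/r_i\to 0$ in the rescaled metric, so $gy=hy$ for some $h\in H$. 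This gives the orbit equality $H\cdot y=G\cdot y$. Since $E(M,x)=0$, Theorem \ref{thm_equivalent} applied to $(\widetilde{M},\Gamma)$ yields that $G\cdot y$ is isometric to a standard Euclidean $\mathbb{R}^l$, which is statement (1).

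For statement (2), I would consider the restriction of the $H$-action to the invariant subset $H\cdot y\cong\mathbb{R}^l$ and let $H'\le\mathrm{Isom}(\mathbb{R}^l)$ denote the closed image; by construction $H'$ acts transitively. To invoke Lemma \ref{pre_nil_trans} it remains to check that $H'$ is nilpotent. Fix $s$ with $N$ nilpotent of step $s$. Since group multiplication and inversion on $N$ are continuous under equivariant Gromov-Hausdorff convergence in the Fukaya-Yamaguchi sense, the iterated commutator identity $[[\cdots[n_1,n_2],\cdots],n_{s+1}]=e$ passes to the limit, so $H$ is nilpotent of step at most $s$, and hence so is its quotient $H'$. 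Lemma \ref{pre_nil_trans} then forces $H'=\mathbb{R}^l$ acting by translations.

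The only mildly delicate step is the orbit identification $H\cdot y=G\cdot y$ in the joint convergence; this hinges on the bounded displacement of finitely many coset representatives collapsing under blow-down, which is routine once the simultaneous limit $(Y,y,G,H)$ is set up. Everything else is a direct application of Theorem \ref{thm_equivalent} and Lemma \ref{pre_nil_trans}.
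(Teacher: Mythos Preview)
Your proposal is correct and follows essentially the same route as the paper: set up the joint limit $(Y,y,G,H)$, use Theorem \ref{thm_equivalent} to get $G\cdot y\cong\mathbb{R}^l$, identify $H\cdot y=G\cdot y$ via the finite-index hypothesis, and then feed the transitive nilpotent action into Lemma \ref{pre_nil_trans}. The only cosmetic difference is that the paper obtains $H\cdot y=G\cdot y$ by noting $G\cdot y=G_0\cdot y$ (connectedness of $\mathbb{R}^l$) rather than your explicit coset-representative argument, and the paper simply asserts ``$H$ is nilpotent'' where you spell out the limit of iterated commutators.
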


\begin{proof}
	(1) For a sequence $r_i\to\infty$, we have
	$$(r_i^{-1}\widetilde{M},\tilde{x},\Gamma,N)\overset{GH}\longrightarrow (Y,y,G,H).$$
	Since $N$ has finite index in $\Gamma$ and the orbit $G\cdot y$ is isometric to $\mathbb{R}^l$ (in particular, it is connected), it follows that
	$$G\cdot y=G_0\cdot y=H\cdot y.$$
	
	(2) Note that $H$ is nilpotent. The result follows directly from (1) and Lemma \ref{pre_nil_trans}.
\end{proof}

\begin{proof}[Proof of Theorem A]
	With Corollary \ref{nil_trans} in hand, the remaining proof is essentially Section 4 of \cite{Pan_al_stable}. We give a sketch here, with all the details can be found in \cite{Pan_al_stable}.
	
	By \cite{Mil,Gro_poly}, $\Gamma$ contains a nilpotent subgroup $N$ of finite index. To prove that $\Gamma$ is almost abelian, it suffices to find an abelian subgroup of $N$ with finite index.
	
	The first step is showing that 
	$$|\gamma^2|\ge 1.9\cdot |\gamma|$$
	holds for all $\gamma\in N$ with large displacement, where $|\gamma|=d(\gamma\tilde{x},\tilde{x})$; in other words, $\gamma$ acts as an almost translation at $\tilde{x}$. Suppose the statement is not true, then we can find a contradicting sequence $\gamma_i\in N$ with $|\gamma_i^2|<1.9\cdot |\gamma_i|$. Put $r_i=|\gamma_i|\to\infty$ and consider the convergent sequence
	$$(r_i^{-1}\widetilde{M},\tilde{x},N,\gamma_i)\overset{GH}\longrightarrow (Y,y,H,h).$$
	A contradiction would arise here since $h$ satisfies
	$d(h^2y,y)\le 1.9 d(hy,y)$ but Corollary \ref{nil_trans} has $h$ acting as a translation at $y$.
	
	Next, for a left-invariant distance function on a nilpotent group $N$ with the above almost translation property for elements with large displacement, it can be shown inductively that $[N,N]$ must be a finite group (see proof of Lemma 4.7 in \cite{Pan_al_stable}). It follows from a fact from group theory that $Z(N)$, the center of $N$, must have finite index in $N$ (see proof of Theorem 4.1 in \cite{Pan_al_stable}).
\end{proof}

\begin{rem}
	We can start with a normal nilpotent subgroup $N$ of $\Gamma$ in the proof. Then the abelian subgroup $Z(N)$ obtained above is normal in $\Gamma$.
\end{rem}

We end the main part of this paper with a conjecture on the escape rate gap (also see Remark \ref{rem_nil_gap}).
\begin{conj}\label{quest_gap}
	Given $n$, there is a universal constant $\epsilon(n)>0$ such that for any open $n$-manifold $(M,x)$ of $\mathrm{Ric}\ge 0$, if $E(M,x)\le \epsilon(n)$, then $\pi_1(M,x)$ is virtually abelian.
\end{conj}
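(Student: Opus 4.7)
The plan is to attempt Conjecture \ref{quest_gap} by contradiction, coupling a blow-up/compactness argument with the machinery already developed for Theorem A. Assume the gap fails for some dimension $n$; then there exist open $n$-manifolds $(M_i, x_i)$ with $\mathrm{Ric} \ge 0$, $\epsilon_i := E(M_i, x_i) \to 0$, and $\Gamma_i := \pi_1(M_i, x_i)$ not virtually abelian. By Lemma \ref{fg} together with \cite{Mil,Gro_poly}, each $\Gamma_i$ contains a nilpotent subgroup $N_i$ of finite index, and $N_i$ itself cannot be virtually abelian. From the last step of the proof of Theorem A, a nilpotent group that is not virtually abelian must contain elements of arbitrarily large displacement that fail the almost-translation inequality $|\gamma^2|\ge 1.9|\gamma|$.

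Next I would select a bad element $\gamma_i \in N_i$ with $r_i := |\gamma_i| \ge i R_i$, where $R_i$ is a threshold depending only on $M_i$ past which $d_H(x_i, c_\gamma) \le 2\epsilon_i |\gamma|$ holds for all $\gamma \in \Gamma_i$; such $R_i$ exists by the definition of $E(M_i,x_i)$. By Gromov precompactness of pointed $n$-manifolds with $\mathrm{Ric}\ge 0$, after passing to a subsequence,
\[
(r_i^{-1}\widetilde{M_i}, \tilde{x_i}, \Gamma_i, N_i, \gamma_i) \overset{GH}{\longrightarrow} (Y, y, G, H, h),
\]
with $H \subseteq G$ closed nilpotent, $d(hy,y) = 1$, and $d(h^2 y, y) \le 1.9$.

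The key step is to show that $G\cdot y \cong \mathbb{R}^k$, at which point Corollary \ref{nil_trans}-style reasoning finishes the proof. Geodesicity of $G \cdot y$ would follow by a direct argument in the spirit of (4)$\Rightarrow$(1) of Theorem \ref{thm_equivalent}: for any $g\in G$ with $d(gy,y)>0$, approximate by $\eta_i\in \Gamma_i$ with $|\eta_i|/r_i \to d(gy,y)$, so that $|\eta_i|\ge R_i$ for $i$ large; the lift of a minimal representing loop $c_{\eta_i}$ then projects into a ball in $M_i$ of radius $2\epsilon_i|\eta_i|$, which collapses to a point after rescaling by $r_i^{-1}$, forcing the limit minimal geodesic from $y$ to $gy$ to lie in $\pi^{-1}(y) = G \cdot y$. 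Combined with Lemma \ref{orbit_eu_cpt} this gives $G\cdot y = \mathbb{R}^k \times Z$ with $Z$ compact. To rule out a nontrivial $Z$ I would rerun the critical-rescaling argument of Proposition \ref{orbit_eu}, now on asymptotic cones of $(Y, y, G)$ itself, which by a diagonal argument are themselves equivariant asymptotic cones of the sequence $\widetilde{M_i}$. Once $G\cdot y \cong \mathbb{R}^k$ is established, $H\cdot y = G\cdot y$ since $N_i$ has finite index in $\Gamma_i$, so Lemma \ref{pre_nil_trans} forces the nilpotent $H$, and in particular $h$, to act by translations; this gives $d(h^2 y, y) = 2 d(hy,y) = 2$, contradicting $d(h^2 y, y)\le 1.9$.

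The hard part, as in Theorem \ref{thm_equivalent}, is the critical-rescaling step. Proposition \ref{orbit_eu} is set up for a fixed manifold $M$ and uses the compactness and connectedness of $\Omega(\widetilde{M}, \Gamma)$ in an essential way. In the current setup one must diagonalize over both the index $i$ and auxiliary rescaling parameters $j$, verifying that the geodesic-orbit property from the previous step is inherited by every limit in this doubly indexed family. This in turn depends on the choice $r_i \ge i R_i$ being strong enough to keep the escape-rate bound $d_H(x_i,c_\gamma)/|\gamma| \le 2\epsilon_i$ valid at all rescalings comparable to $r_i$, which seems plausible but is the technical heart of the proposed proof. If this machinery can be set up uniformly in $n$, one would in fact obtain an effective quantitative version of Theorem \ref{thm_equivalent}, from which the gap $\epsilon(n)$ would follow.
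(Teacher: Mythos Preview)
This statement is posed in the paper as an open conjecture; the paper does not prove it, so there is no proof to compare your attempt against. You are proposing an attack on an open problem, and you correctly identify the critical-rescaling step as the crux.

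Your outline through ``$G\cdot y$ is geodesic in $Y$'' is sound. The step that does not go through is the adaptation of Proposition~\ref{orbit_eu}. That argument exploits a feature of $\Omega(\widetilde M,\Gamma)$ for a \emph{fixed} manifold which is unavailable for a varying sequence: any element of $\Omega(\widetilde M,\Gamma)$ can be realized along many independent sequences $r_i\to\infty$, so given $Y_1,Y_2\in\Omega(\widetilde M,\Gamma)$ one may, after reindexing, force $t_i=r_i/s_i\to\infty$ regardless of how $Y_1,Y_2$ arose. This is precisely what lets the paper place the $\mathbb{R}^{k+1}$-orbit space at $N_i$ and the $\mathbb{R}^k$-orbit space at $t_iN_i$; as Remark~\ref{rem_crit_scal} explains, the argument collapses under the opposite arrangement. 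In your setting the blow-up $Y_1$ and blow-down $Y_2$ of the single limit $Y$ are realized, via diagonalization, at scales $a_i<r_i<b_i$ on $\widetilde M_i$ all tied to the \emph{same} index $i$; the ordering $a_i<b_i$ is forced and cannot be reversed by passing to subsequences, so you are stuck with exactly the arrangement that Remark~\ref{rem_crit_scal} says fails.

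There is a second, related obstruction. Your geodesic-orbit argument requires the bound $d_H(x_i,c_\gamma)\le 2\epsilon_i|\gamma|$, which only holds for $|\gamma|\ge R_i$, and the thresholds $R_i$ are entirely uncontrolled as functions of $i$. The choice $r_i\ge iR_i$ buys you blow-ups of $Y$ by factors at most $i$ while staying in the admissible range, but the critical-rescaling machinery must explore all intermediate scales and in general needs arbitrarily large blow-up factors. Making Theorem~\ref{thm_equivalent} effective in the way you suggest would amount to bounding $R_i$ uniformly in terms of $n$ and $\epsilon_i$ alone, and nothing in the hypothesis $E(M_i,x_i)=\epsilon_i$ (a limsup condition) gives you that.
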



\appendix

\section{The bounded case}\label{app_bdd}

As mentioned in the introduction, it follows from the Cheeger-Gromoll splitting theorem that if $\sup_{\gamma\in \Gamma} d_H(x,c_\gamma)$ is finite, then $\pi_1(M,x)$ is virtually abelian. This result is well-known to experts, but we cannot find a proof in the literature, so we include a proof here for readers' convenience.

\begin{prop}\label{bounded_case}
	Let $(M,x)$ be an open manifold of $\mathrm{Ric}\ge 0$. If $\sup_{\gamma\in \Gamma} d_H(x,c_\gamma)$ is finite, then $\pi_1(M,x)$ is virtually abelian.
\end{prop}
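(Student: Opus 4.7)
The plan is to apply the Cheeger--Gromoll splitting theorem iteratively to decompose the universal cover as $\widetilde{M}\cong\mathbb{R}^k\times N$ with $N$ line-free, and then to analyze the induced product action of $\Gamma$.

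Set $R_0:=\sup_{\gamma\in\Gamma}d_H(x,c_\gamma)<\infty$. Then every minimal lift $\tilde c_\gamma$ from $\tilde x$ to $\gamma\tilde x$ is contained in $\pi^{-1}(\overline{B_{R_0}(x)})=\bigcup_{\beta\in\Gamma}\overline{B_{R_0}(\beta\tilde x)}$. For a sequence $\gamma_i\in\Gamma$ with $|\gamma_i|\to\infty$, the midpoint $m_i$ of the unit-speed lift $\tilde c_{\gamma_i}$ lies within $R_0$ of some $\beta_i\tilde x$. The translated geodesics $\beta_i^{-1}\tilde c_{\gamma_i}$, reparametrized so that $\beta_i^{-1}m_i$ sits at time $0$, have lengths tending to infinity and midpoints within $R_0$ of $\tilde x$, so by Arzela--Ascoli they subconverge to a bi-infinite line in $\widetilde{M}$. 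Cheeger--Gromoll then yields $\widetilde{M}\cong\mathbb{R}\times\widetilde{M}_1$, and iterating this procedure produces $\widetilde{M}\cong\mathbb{R}^k\times N$ with $N$ a complete manifold of $\mathrm{Ric}\ge 0$ containing no line. Since the maximal Euclidean factor of a complete Riemannian manifold is canonical, every $\gamma\in\Gamma$ decomposes as a product isometry $(\gamma_1,\gamma_2)\in\mathrm{Isom}(\mathbb{R}^k)\times\mathrm{Isom}(N)$.

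Write $\tilde x=(0,p)$. A minimal geodesic from $(0,p)$ to $(v,q)$ in the Riemannian product is a product $(tv,\sigma(t))$ whose $N$-component $\sigma$ is minimal from $p$ to $q$, and since projection to $N$ is $1$-Lipschitz, the $R_0$-covering hypothesis descends: every point of $\sigma$ lies within $R_0$ of the $N$-projected orbit $\{\alpha_2p:\alpha\in\Gamma\}$. If this $N$-orbit were unbounded, running the midpoint--translation argument inside $N$ would produce a line in $N$, contradicting line-freeness. Hence the $N$-orbit is bounded, and Arzela--Ascoli applied to isometries of the proper space $N$ shows the closure $H$ of $p_N(\Gamma)$ in $\mathrm{Isom}(N)$ is compact. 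Combining compactness of $H$ with proper discontinuity of $\Gamma$, the projection $p_1:\Gamma\to\mathrm{Isom}(\mathbb{R}^k)$ has discrete image $\Gamma':=p_1(\Gamma)$ with finite kernel (a discrete subgroup of the compact group $\{e\}\times H$), so $\Gamma$ is a finite extension of the discrete group $\Gamma'\subset\mathrm{Isom}(\mathbb{R}^k)$.

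To conclude I will show $\Gamma'$ acts cocompactly on $\mathbb{R}^k$, which by Bieberbach's first theorem makes $\Gamma'$, and hence $\Gamma$, virtually abelian. Projecting the $R_0$-covering hypothesis to $\mathbb{R}^k$ and using $\Gamma'$-equivariance, for any two orbit points $v,w\in\Gamma'\cdot 0$ the segment $[v,w]$ lies in the $R_0$-neighborhood of $\Gamma'\cdot 0$, so the $\Gamma'$-invariant closed convex hull $C$ of $\Gamma'\cdot 0$ is contained in this $R_0$-neighborhood. The lines extracted during the iterative splitting correspond to sequences in $\Gamma$ whose asymptotic directions in $\mathbb{R}^k$ span a $k$-dimensional subspace, so $C$ contains rays in $k$ linearly independent directions; hence the recession cone of $C$ is all of $\mathbb{R}^k$, forcing $C=\mathbb{R}^k$ and uniform $R_0$-density of $\Gamma'\cdot 0$ in $\mathbb{R}^k$. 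The main obstacle is this last cocompactness step: one must carefully convert the segment-wise $R_0$-closeness (obtained by $1$-Lipschitz projection from $\widetilde{M}$ to $\mathbb{R}^k$) into uniform density in $\mathbb{R}^k$ via the convex hull argument, and verify that the $k$ line directions extracted geometrically do yield $k$ independent asymptotic directions in $\Gamma'\cdot 0$.
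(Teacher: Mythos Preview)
Your argument tracks the paper through the maximal splitting $\widetilde{M}=\mathbb{R}^k\times N$ with $N$ line-free, the midpoint argument showing the $N$-orbit of $\tilde x$ is bounded, and the conclusion that $p_N(\Gamma)$ has compact closure $H$. The gap you flag in the last step is genuine and not merely technical: the cocompactness of $\Gamma'=p_1(\Gamma)$ on $\mathbb{R}^k$ is \emph{false} in general. Take $\widetilde{M}=\mathbb{R}^2$ (so $k=2$, $N$ a point) and $\Gamma=\mathbb{Z}$ acting by unit translation along the first axis; then $M=S^1\times\mathbb{R}$ and $\sup_\gamma d_H(x,c_\gamma)=0$, yet $\Gamma'\cdot 0=\mathbb{Z}\times\{0\}$ is not $R_0$-dense in $\mathbb{R}^2$ for any $R_0$. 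The root cause is that the maximal Euclidean factor of $\widetilde{M}$ can strictly contain the span of directions coming from $\Gamma$-orbit lines, so your two requirements---$N$ line-free (needed for the product decomposition of isometries) and all $k$ directions arising from orbit sequences (needed for your convex-hull cocompactness argument)---are in general incompatible.

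The paper avoids cocompactness entirely. After reaching the same point ($\Gamma\subset\mathrm{Isom}(\mathbb{R}^k)\times K$ with $K$ compact), it invokes Milnor--Gromov to pass to a finite-index nilpotent subgroup of $\Gamma$, and then proves directly (Lemma~\ref{app_nil_to_abel}) that any nilpotent subgroup of $K\times\mathrm{Isom}(\mathbb{R}^k)$ is virtually abelian, by projecting to $O(k)$ and using that a compact connected nilpotent Lie group is a torus. If you want to salvage your Bieberbach route without Milnor--Gromov, you would need the separate fact that every discrete subgroup of $\mathrm{Isom}(\mathbb{R}^k)$ is virtually abelian even without cocompactness; this is true but requires its own argument, for instance by first locating a $\Gamma'$-invariant affine subspace on which the action \emph{is} cocompact.
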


\begin{lem}\label{app_nil_to_abel}
	Let $\Gamma$ be a nilpotent subgroup of $K\times \mathrm{Isom}(\mathbb{R}^k)$, where $K$ is a compact Lie group. Then $\Gamma$ is virtually abelian. 
\end{lem}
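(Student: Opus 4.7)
The plan is to follow the structure of Lemma \ref{pre_nil_trans}: first pass to a finite-index subgroup of $\Gamma$ whose image in the compact factor $K \times O(k)$ is abelian, then show that this subgroup is itself abelian via a direct commutator computation in the spirit of Lemma \ref{commute_E}.

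For the first reduction, let $\pi : K \times \mathrm{Isom}(\mathbb{R}^k) \to K \times O(k)$ be the obvious projection and set $H = \overline{\pi(\Gamma)}$ inside the compact Lie group $K \times O(k)$. Then $H$ is a compact nilpotent Lie group, so its identity component $H_0$ is a torus and the component group $H/H_0$ is finite. Consequently $\Gamma' := \pi^{-1}(H_0) \cap \Gamma$ is a finite-index subgroup of $\Gamma$ whose $\pi$-image is abelian, and it suffices to show that $\Gamma'$ is abelian.

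For the second step I want to prove the following analogue of Lemma \ref{commute_E}: if $g_1 = (k_1, A_1, v_1)$ and $g_2 = (k_2, A_2, v_2)$ lie in a nilpotent subgroup of $K \times \mathrm{Isom}(\mathbb{R}^k)$ and their projections $\pi(g_i)$ commute, then $g_1 g_2 = g_2 g_1$. A direct computation gives
$$[g_1, g_2] = (e, I, w), \qquad w = (A_1 - I) v_2 - (A_2 - I) v_1.$$
By orthogonality of $A_1$ and $A_2$, each $(A_i - I)$ maps $\mathbb{R}^k$ into the orthogonal complement of its own fixed subspace, hence into $V^\perp$, where $V \subset \mathbb{R}^k$ is the common fixed subspace of $A_1, A_2$; in particular $w \in V^\perp$. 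On the other hand, iterating conjugation by $g_1$ yields
$$[g_1, [g_1, \ldots [g_1, [g_1, g_2]] \ldots]] = (e, I, (A_1 - I)^n w),$$
which must be trivial for $n$ large by nilpotency. Since $A_1 \in O(k)$ is semisimple with unit-modulus eigenvalues, $(A_1 - I)^n w = 0$ forces $A_1 w = w$; the symmetric argument with $g_2$ gives $A_2 w = w$, so $w \in V$. Combining the two constraints yields $w \in V \cap V^\perp = \{0\}$ and hence $g_1 g_2 = g_2 g_1$. Applied to all pairs in $\Gamma'$, this shows $\Gamma'$ is abelian, so $\Gamma$ is virtually abelian.

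The main obstacle, and the only step that is not purely formal, is the iterated-commutator argument, which uses both the nilpotency of $\Gamma$ and the Euclidean/orthogonal structure on $\mathbb{R}^k$ (to pass from $(A_1 - I)^n w = 0$ to $A_1 w = w$ via semisimplicity of $A_1$). This mirrors the proof of Lemma \ref{commute_E}; the extra compact factor $K$ contributes nothing new because the $K$-part of any commutator vanishes automatically once $\pi(g_1)$ and $\pi(g_2)$ commute.
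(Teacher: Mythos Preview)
Your proof is correct and follows essentially the same idea as the paper: pass to a finite-index subgroup whose image in the compact rotational factor is abelian, then use the commutator identity from Lemma~\ref{commute_E} to conclude that the subgroup itself is abelian. The only organizational difference is that the paper projects separately to $K$ and to $\mathrm{Isom}(\mathbb{R}^k)$, finds abelian finite-index subgroups $A_1,A_2$ in each image, and intersects $\Gamma$ with $A_1\times A_2$, whereas you project once to $K\times O(k)$ and then carry out the commutator computation of Lemma~\ref{commute_E} inline (the extra $K$-coordinate being inert); your route is slightly more streamlined but mathematically identical.
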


\begin{proof}
	We first prove the case that $K$ is trivial. We consider the group homomorphism
	\begin{align*}
		\pi: \mathrm{Isom}(\mathbb{R}^k)&\to O(n)\\
		(A,x)&\mapsto A
	\end{align*}
	Let $H$ be the closure of $\pi(\Gamma)$ in $O(k)$. $H$ is a compact nilpotent Lie group. Its identity component $H_0$ has finite index in $H$. Moreover, because $H_0$ is connected and nilpotent, it must be a torus. Therefore, by Lemma \ref{commute_E}, $\pi^{-1}(H_0)$ is an abelian subgroup of finite index in $\Gamma$.
	
	For the general case, let
	$$p_1: \Gamma\to K, \quad p_2: \Gamma\to \mathrm{Isom}(\mathbb{R}^k)$$
	be the natural projections. We have shown that $p_2(\Gamma)$ has an abelian subgroup $A_2$ of finite index. For $p_1(\Gamma)$, its closure is a compact nilpotent subgroup of $K$. By taking its identity component as in the first paragraph, it follows that $p_1(\Gamma)$ has an abelian subgroup $A_1$ of finite index. Then $\Gamma\cap (A_1\times A_2)$ is the desired abelian subgroup of finite index in $\Gamma$.
\end{proof}

\begin{proof}[Proof of Proposition \ref{bounded_case}]
	From the Cheeger-Gromoll splitting theorem \cite{CG_split}, $\widetilde{M}$ splits isometrically as $\mathbb{R}^k\times Z$, where $Z$ does not contain any line. Since isometries take lines to lines, the isometry group of $\widetilde{M}$ splits as
	$$\mathrm{Isom}(\widetilde{M})=\mathrm{Isom}(\mathbb{R}^k)\times \mathrm{Isom}(Z).$$
	Let 
    $ q:\mathrm{Isom}(\widetilde{M})\to \mathrm{Isom}(Z)$
	be the natural projection. We write $\tilde{x}=(0,z)$.
	
	We claim that
	$\{q(\gamma)\cdot z|\gamma\in \Gamma\}$
	is bounded in $Z$. Suppose the contrary, then there is a sequence $\gamma_i\in \Gamma$ such that $d_Z(z,q(\gamma_i)\cdot z)\to \infty$. For each $i$, let $\tilde{c}_i:[0,l_i]\to\widetilde{M}$ be a unit speed minimal geodesic from $\tilde{x}$ to $\gamma_i\tilde{x}$. Let $\pi:\widetilde{M}\to M$ be the covering map. By hypothesis, $\tilde{c}_i$ is contained in $\pi^{-1}(\overline{B_R}(\tilde{x}))$ for all $i$, where $$R=\sup_{\gamma\in \Gamma} d_H(x,c_\gamma)<\infty.$$ 
	Note that $\pi^{-1}(\overline{B_R}({x}))$ is $\Gamma$-invariant with a compact quotient $\overline{B_R}(x)$. Hence for each midpoint $m_i=c_i(l_i/2)$, there is $\beta_i\in \Gamma$ such that $d(m_i,\beta_i\tilde{x})\le R$. $\beta_i^{-1}\circ c_i$ is a sequence of minimal geodesics with whose midpoints are at most $R$ away from $\tilde{x}$. Projecting these minimal geodesics $\beta_i^{-1}\circ c_i$ to the $Z$-factor, we obtain a sequence of minimal geodesics with length equal to $d_Z(z,q(\gamma_i)\cdot z)\to \infty$ and midpoints being at most $R$ away from $z$. Passing to a subsequence, we result in a line of $Z$. A contradiction. This verifies the claim.
	
	The claim implies that $q(\Gamma)$ is pre-compact in $\mathrm{Isom}(Z)$. Let $K$ be its closure in $\mathrm{Isom}(Z)$. By \cite{Mil,Gro_poly}, $\Gamma$ contains a nilpotent subgroup $N$ of finite index. Then $N$ is a subgroup of $\mathrm{Isom}(\mathbb{R}^k)\times K$. The result follows directly from Lemma \ref{app_nil_to_abel}.
\end{proof}

\section{Examples and their escape rates}\label{app_exmp}

We estimate the escape rates of some known and new examples of open manifolds as warped products $[0,\infty)\times_f S^{p-1}\times N_r$ with $\mathrm{Ric}>0$. We do not seek the most general statements or the most effective estimates here.

\begin{exmp}(Nabonnand/Bergery's examples) Nabonnand constructed a doubly warped product $M^4=[0,\infty)\times_f S^2 \times_h S^1$ of positive Ricci curvature \cite{Nab}. Later, Bergery generalized Nabonnand’s method and constructed a doubly warped product $M=[0,\infty)\times_f S^{p-1} \times_h N$ of $\mathrm{Ric}_M>0$ for any closed manifold $N$ of $\mathrm{Ric}_N\ge 0$ \cite{Bergery}. Note that in these examples, because $M$ is diffeomorphic to $\mathbb{R}^{p}\times N$ and $N$ admits a metric of nonnegative Ricci curvature, $\pi_1(M)=\pi_1(N)$ is virtually abelian.
	
	We write the doubly warped product on $M=[0,\infty)\times_f S^{p-1}\times_h N$ as
	$$g=dr^2+f(r)^2ds^2+h(r)^2g_0,$$
	where $ds^2$ is the standard metric on the unit sphere $S^{p-1}$ and $g_0$ is a metric on $N$. At $r=0$, $f$ and $h$ satisfies
	$$f(0)=0,\quad f'(0)=1,\quad f''(0)=0,\quad h(0)>0,\quad h'(0)=0.$$
    In these constructions, $h$ is strictly decreasing and satisfies $h=(f')^{1/q}$, where $q=\dim(N)$. The function $f$ is not explicit, as the solution to a differential equation. $f$ has $f'>0$ but also the freedom to satisfy $f'\to 0$ or $f'\to c>0$ as $r\to\infty$ (see \cite{Bergery}). When $f'\to c>0$, the diameter of $N$ converges to some positive constant as $r\to\infty$.
\end{exmp}

\begin{prop}\label{end_c}
	Let $M=[0,\infty)\times_f S^{p-1} \times_h S^1$ be a doubly warped product as above. Fix a reference point $y\in S^1$. Let $x=(0,y)\in M$, where $0\in\mathbb{R}^p$ and $y\in S^1$; let $S^1(r)$ be a copy of $S^1$ at distance $r$. Suppose that $\mathrm{diam}(S^1(r))$ is strictly decreasing with a positive limit, then $E(M,x)=0$.
\end{prop}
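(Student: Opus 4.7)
The plan is to verify condition (2) of Theorem \ref{thm_equivalent} for $\Gamma = \pi_1(M,x) = \mathbb{Z}$, after which $E(M,x) = 0$ is immediate from the theorem. Write $\gamma_1$ for the generator, $|c_n| = \rho(e, \gamma_1^n) = d_{\widetilde{M}}(\tilde{x}, \gamma_1^n \tilde{x})$, and $h_\infty := \lim_{r \to \infty} h(r) > 0$. The pivotal first step is the sharp asymptotic $|c_n|/n \to 2\pi h_\infty$. For the lower bound, a unit-speed lifted minimizing geodesic from $\tilde{x} = (0,*,0)$ to $\gamma_1^n \tilde{x} = (0,*,2\pi n)$ in $\widetilde{M} = [0,\infty) \times_f S^{p-1} \times_h \mathbb{R}$ satisfies $|\dot{t}| \le 1/h(r) \le 1/h_\infty$, so integrating the $t$-velocity forces $|c_n| \ge 2\pi n h_\infty$. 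For the matching upper bound, the piecewise-smooth loop that travels radially out to distance $R$, wraps $S^1(R)$ exactly $n$ times, then returns radially has length $2R + 2\pi n h(R)$; picking $R_\epsilon$ with $h(R_\epsilon) \le h_\infty + \epsilon$ yields $\limsup_n |c_n|/n \le 2\pi(h_\infty + \epsilon)$, and letting $\epsilon \to 0$ closes the gap.

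With this asymptotic in hand, I build the weak-asymptotic-geodesic decomposition. Given $\epsilon > 0$, choose $K = K(\epsilon)$ large enough that $|c_K| \le 2\pi K h_\infty(1 + \epsilon/4)$. For $n \ge K$, write $n = Km + r$ with $0 \le r < K$ and use the word $\gamma_1^n = (\gamma_1^K)^m \cdot \gamma_1^r$; each of the $m$ or $m+1$ syllables has $\rho$-length at most $M_\epsilon := \max_{0 \le j \le K} |c_j|$, a constant depending only on $\epsilon$. The sum of syllable lengths is at most $2\pi h_\infty K m(1+\epsilon/4) + M_\epsilon$, and dividing by $|c_n| \ge 2\pi h_\infty Km$ gives a ratio of at most $(1+\epsilon/4) + M_\epsilon/(2\pi h_\infty Km)$, which is $\le 1+\epsilon$ once $m$ is large enough. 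Small $n$ are covered by the trivial one-syllable decomposition. Taking $s(\epsilon, R) = M_\epsilon$, which is constant in $R$ so that $R/s(\epsilon,R) \to \infty$ as $R \to \infty$, satisfies Definition \ref{def_weak_asy_geo}, and Theorem \ref{thm_equivalent}(2)$\Rightarrow$(1) concludes $E(M,x) = 0$.

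The main obstacle is the first step: a priori the minimal loops $c_n$ might acquire length from the $r$-direction at a rate proportional to $n$, in which case $|c_n|/n$ would exceed $2\pi h_\infty$ and no word decomposition of this type could be $(1+\epsilon)$-efficient. The hypothesis that $\mathrm{diam}(S^1(r))$ (equivalently $h(r)$) is strictly decreasing with positive limit is exactly what is needed to make the radial-wrap-radial upper bound asymptotically sharp, since $h(R_\epsilon)$ can be pushed arbitrarily close to $h_\infty$ while the radial overhead $2R_\epsilon$ stays bounded independently of $n$; once the asymptotic is established, the remaining construction is elementary arithmetic.
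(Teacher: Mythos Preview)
Your argument is correct and takes a genuinely different route from the paper's. The paper works directly with the escape rate: for each $l$ it builds a comparison loop $\sigma_l$ (go radially out to a carefully chosen $r_l$ satisfying $2r_l=l\cdot\epsilon(r_l)$, so $r_l/l\to 0$; wrap $l$ times around $S^1(r_l)$; return), and then uses a $1$-Lipschitz projection $F:M\to[0,\infty)\times S^1$ onto the flat cylinder whose circle factor has the limiting length $c=\lim_{r\to\infty}\delta(r)$. This projection yields the Pythagorean lower bound $\mathrm{length}(c_l)\ge 2\bigl(R_l^2+\tfrac14 c^2 l^2\bigr)^{1/2}$ with $R_l=d_H(x,c_l)$; comparing with $\mathrm{length}(c_l)\le \mathrm{length}(\sigma_l)=lc+4r_l$ gives $R_l^2\le 2lc\,r_l+4r_l^2$ and hence $R_l/|c_l|\to 0$ directly. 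No curvature hypothesis and no appeal to Theorem~\ref{thm_equivalent} are used.

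Your approach instead establishes the sharp asymptotic $|c_n|/n\to 2\pi h_\infty$ (your lower bound via $|\dot t|\le 1/h_\infty$ is exactly the infinitesimal version of the paper's cylinder projection) and then verifies that the orbit metric on $\mathbb{Z}$ is asymptotically geodesic in the sense of Definition~\ref{def_asy_geo}, invoking $(2)\Rightarrow(1)$ of Theorem~\ref{thm_equivalent}. This is legitimate and nicely exhibits the proposition as an instance of the main equivalence; the trade-off is that you import the full machinery of that theorem (equivariant Gromov--Hausdorff limits and Cheeger--Colding splitting for $(3)\Rightarrow(4)$), so your proof relies on $\mathrm{Ric}\ge 0$, whereas the paper's argument is entirely elementary and curvature-free. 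One small bookkeeping point: with $s(\epsilon,R)=M_\epsilon:=\max_{0\le j\le K}|c_j|$, the intermediate range where $m\ge 1$ but $m$ is not yet ``large enough'' is not covered---for those $n$ the multi-syllable word may violate the $(1+\epsilon)$ bound while the trivial one-syllable word has length $|c_n|>M_\epsilon$. The fix is immediate: enlarge $s(\epsilon)$ to $\max_{0\le j\le Km_0}|c_j|$, where $m_0=m_0(\epsilon)$ is your threshold for ``$m$ large enough''.
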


\begin{proof}
	Let $\delta(r)$ be the length of the circle $S^1(r)$ with metric $h(r)^2g_0$. By assumption, $\delta(r)=c+\epsilon(r)$ for some strictly decreasing function $\epsilon(r)\to 0$ and some $c>0$. For each positive integer $l$, we choose $r_l>0$ such that
	$$2r_l=l\cdot \epsilon(r_l).$$
	Note that $r_l/l\to 0$.
	Let $\sigma_l$ be a loop based at $x$ constructed as below: first go along a minimal geodesic from $x$ to $y\in S^1(r_l)$, then go around $S^1(r_l)$ $l$ times, then go back to $x$ along a minimal geodesic. By construction, 
	$$\mathrm{length}(\sigma_l)=lc+l\epsilon(r_l)+2r_l=lc+4r_l.$$
	Let $c_l$ be a minimal geodesic loop at $x$ representing the class of $\sigma_l$ in $\pi_1(M,x)$ and let $R_l=d_H(x,c_l)$. 
	
	We define a map $F$ from $M$ to a cylinder $[0,\infty)\times S^1$ with a product metric, where the metric on the $S^1$-factor has length $c$, as follows:
	For any point in $M$, we can write it as $(v,z)$, where $v\in\mathbb{R}^p$
    and $z\in S^1$. We denote $|v|$ as the distance between $0$ and $v$ in $\mathbb{R}^p$ with metric $[0,\infty)\times_f S^{p-1}$. We define $F$ as
	$$F:M\to [0,\infty)\times S^1,\quad (v,z)\to (|v|,z).$$
	It is clear that $F$ is a Lipschitz map. $F(c_l)$ is a loop contained exactly in $[0,R_l]\times S^1$. As a loop based at a point in $\{0\}\times S^1$ that touches $\{R_l\}\times S^1$ and also warps the circle $l$ times, we can give a lower bound on the length of $F(c_l)$:
	$$\mathrm{length}(c_l)\ge \mathrm{length}(F(c_l))\ge 2\left(R_l^2+\frac{1}{4}c^2l^2\right)^{1/2}.$$
	Consequently, $\mathrm{length}(c_l)\le \mathrm{length}(\sigma_l)$ yields
	$$2\left(R_l^2+\frac{1}{4}c^2l^2\right)^{1/2}\le lc+4r_l.$$
	Hence $R_l^2\le 2lcr_l+4r_l^2$ and
	$$\dfrac{d_H(x,c_l)}{\mathrm{length}(c_l)}\le \dfrac{R_l}{lc}\le \left(\dfrac{2r_l}{lc}+\dfrac{4r_l^2}{l^2c^2}\right)^{1/2}\to 0.$$
	We conclude that $E(M,x)=0$
\end{proof}

\begin{rem}
	Proposition \ref{end_c} also holds if one replaces the circle $S^1$ by a closed manifold $N$ with a metric $g_0$ of nonnegative Ricci curvature.
\end{rem}

\begin{exmp}\label{exmp_Wei}(Wei's examples)
Wei constructed an open manifold $M$ of $\mathrm{Ric}>0$ with a torsion-free nilpotent non-abelian fundamental group \cite{Wei}. We briefly recall this construction as below. Let $\widetilde{N}$ be a simply connected nilpotent Lie group. Note that $\widetilde{N}$ does not admit a metric of nonnegative Ricci curvature, but it admits one of almost nonnegative sectional curvature. Let $\{X_1,...,X_n\}$ be a triangular basis of the Lie algebra of $\widetilde{N}$, that is, $[X_j,X]\in l_{j-1}$ for all $X\in \mathrm{Lie}(\widetilde{N})$, where $l_j$ is spanned by $X_1$,..,$X_{j-1}$. We define a family of left-invariant metrics $\widetilde{g_r}$ on $\widetilde{N}$ by setting $\{X_i\}$ to be orthogonal and 
$$||X_i||_r=h_i(r)=(1+r^2)^{-\alpha_i},$$
where $\alpha_n=\alpha>0$ and $2\alpha_i-4\alpha_{i+1}=1$ for $1\le i\le n-1$.
Let $\Gamma$ be a lattice of $\widetilde{N}$, then $N=\widetilde{N}/\Gamma$ endows a family of metrics $g_r$ such that
$$\mathrm{Ric}(g_r)\ge -\dfrac{c}{1+r^2},$$
where $c>0$. Denote $(N,g_r)$ as $N_r$. Then $M=[0,\infty)\times_f S^{p-1}\times N_r$ with the metric
$$g=dr^2+f(r)^2 ds^2+g_r,$$
has positive Ricci curvature, where $f(r)=r(1+r^2)^{-1/4}$ and $p$ is sufficiently large. Note that this metric is not a doubly warped product in the usual sense since the metrics on $N_r$ decay at different rates for different steps of $N$.
When $\dim(N)>1$, $\pi_1(M)=\Gamma$ is not virtually abelian.
\end{exmp}

By Theorem A, the above example ought to have a positive escape rate. Here we estimate a positive lower bound of $E(M,x)$ for $M=[0,\infty)\times_f S^{p-1}\times_h S^1$, where $x=(0,y)\in M$. The nilpotent example $N$ clearly shares the same lower bound. 

Let $\delta(r)$ be the length of $S^1(r)$. Scaling by a constant if necessary, we write $\delta(r)=(1+r^2)^{-\alpha}$, where $\alpha>0$. For each integer $l>0$, we choose $r_l$ such that $2r_l=l\delta(r_l)$. Then $r_l\to \infty$ as $l\to\infty$. We consider a loop $\sigma_l$ similarly as in the proof of Proposition \ref{end_c}: first go from $x$ to $y\in S^1(r_l)$ along a minimal geodesic, go around $S^1(r_l)$ $l$ times, then go back to $x$. $\sigma_l$ has length 
$$\mathrm{length}(\sigma_l)=2r_l+l\cdot \delta(r_l)=4r_l=2l\delta(r_l).$$
Let $c_l$ be a minimal geodesic loop based at $x$ representing the class of $\sigma_l$. Then
$$\mathrm{length}(c_l)\le \mathrm{length}(\sigma_l).$$
Let $R_l=d_H(x,c_l)$. Sine $c_l$ is contained in $\overline{B_{R_l}}(x)$, then $R_l$
satisfies 
$$l\cdot \delta(R_l)\le \mathrm{length}(c_l)\le 2l \delta(r_l),$$
that is,
$$(1+R_l^2)^{-\alpha}\le 2(1+r_l^2)^{-\alpha}.$$
Hence
$$\limsup_{l\to\infty}\dfrac{R_l}{r_l}\ge \left(\dfrac{1}{2}\right)^{\frac{1}{2\alpha}}.$$
It follows that
$$E(M,x)=\limsup_{l\to\infty} \dfrac{d_H(x,c_l)}{\mathrm{length}(c_l)}\ge \limsup_{l\to\infty} \dfrac{R_l}{4r_l}\ge \left(\dfrac{1}{2}\right)^{2+\frac{1}{2\alpha}}>0.$$

\begin{rem}
	Since the above example has a fundamental group $\mathbb{Z}$ and $E(M,x)>0$, the converse of Theorem A is not true in general.
\end{rem}

\begin{rem}
	With a better choice of $\sigma_l$, one can show that
	$$E(M,x)\ge \dfrac{1}{\alpha^{1/\alpha}(2+1/\alpha)}>0;$$
	in particular, $E(M,x)\to 1/2$ as $\alpha\to \infty$.
\end{rem}

\begin{rem}
	By a similar, but slightly more involved, argument as the proof of Proposition \ref{end_c}, one can show that Wei's construction $[0,\infty)\times_f S^{p-1}\times_h S^1$ has $E(M,x)\le b(\alpha)$, where $b(\alpha)\to 0$ as $\alpha\to 0$.
\end{rem}

\begin{rem}\label{rem_nil_gap}
	If $N$ is not a circle, for example, $N$ is the Heisenberg $3$-manifold, then we can use 
	$$h_2(r)=h_3(r)=(1+r^2)^{-\alpha},\quad h_1(r)=(1+r^2)^{-\beta},$$ where $\alpha>0$ and $\beta=2\alpha+1/2$
    as given in Example \ref{exmp_Wei}. This example actually has a uniform positive bound for $E(M,x)$, regardless of the choice of $\alpha>0$. We are not aware of examples with $\mathrm{Ric}\ge 0$, torsion-free nilpotent non-abelian fundamental groups, and arbitrarily small escape rates. This leads to Conjecture \ref{quest_gap}. 
\end{rem}

\begin{exmp}(An example with a logarithm decay $h(r)$) We construct an open manifold $M=[0,\infty)\times_f S^{p-1}\times_h S^1$ of $\mathrm{Ric}>0$ and $E(M,x)=0$. Unlike the case in Proposition \ref{end_c}, here the diameter of $S^1(r)$ tends to $0$ as $r\to\infty$. We use the warping functions
$$f(r)=\dfrac{\sqrt{\ln 2} \cdot r }{\ln^{1/2}(2+r^2) },\quad h(r)=\dfrac{1}{\ln^\alpha(2+r^2)},$$
where $\alpha>0$. It is straight-forward to check that
$$f(0)=0,\quad f'(0)=1,\quad f''(0)=0.$$
$$f'>0,\quad 1-(f')^2\ge 0,\quad f''\le 0,\quad  h'<0.$$
Let $H=\partial/\partial r$, $U$ a unit vector tangent to $S^{p-1}$, and $X$ a unit vector tangent to $S^1$. By direct calculation,
\begin{align*}
	&\mathrm{Ric}(H,H)=-\dfrac{h''}{h}-(p-1)\dfrac{f''}{f},\\
	&\mathrm{Ric}(U,U)=-\dfrac{f''}{f}+\dfrac{p-2}{f^2} \left[1-(f')^2\right]-\dfrac{f'h'}{fh},\\
	&\mathrm{Ric}(X,X)=-\dfrac{h''}{h}-(p-1)\dfrac{f'h'}{fh}.
\end{align*}
It is straight-forward to check that with the above $f$ and $h$, $M$ has positive Ricci curvature when $p$ is sufficiently large.
\end{exmp}

Unlike Wei's construction, one cannot use this type of function to construct a warped product $[0,\infty)\times_f S^{p-1}\times N_r$ of positive Ricci curvature, where $N=\widetilde{N}/\Gamma$ is a compact nilpotent manifold. We explain the obstructions. $\widetilde{N}$ does admit a family of metrics $\widetilde{g_r}$ of almost nonnegative Ricci curvature by using
$$||X_i||_r=h_i(r)=\dfrac{1}{\ln^{\alpha_i}(2+r^2)},$$
where $\alpha_i>0$ as in Wei's construction. Then $(N,g_r)$ satisfies
$$\mathrm{Ric}(g_r)\ge -\dfrac{c}{\ln(2+r^2)},$$
where $c>0$. However, the warped product has
$$\mathrm{Ric}(X_i,X_i)=-\dfrac{h_i''}{h_i}-(p-1)\dfrac{f'h_i'}{fh_i}+\mathrm{Ric}(g_r)(X_i)-\sum_{j\not= i} \dfrac{h'_ih'_j}{h_ih_j};$$
$\mathrm{Ric}(g_r)(X_i)$ is the dominating term, which decays at a rate of $\ln^{-1}(r)$, while all other terms decays at a rate of $r^{-2}$. Hence it is impossible to obtain positive Ricci curvature by raising $p$. 

This matches our Theorem A. Actually, $E(M,x)=0$ as we show below for this type of warping functions $h$.

Let $\delta(r)=\ln^{-\alpha}(2+r^2)$ be the length of $S^1(r)$, where $S^1(r)$ is a copy of $S^1$ in $M=[0,\infty)\times_f S^{p-1}\times_h S^1$ at distance $r$. For each integer $l>0$, we choose $r_l>0$ such that the function 
$$r\mapsto 2r+l\cdot\delta(r)$$
obtains the minimum at $r_l$; in other words, $r_l$ such that
$$\alpha lr_l=\ln^{\alpha+1}(2+r_l^2)\cdot (2+r_l^2).$$
Let $\sigma_l$ be the loop constructed as previously and let $c_l$ be a minimal geodesic loop representing the class of $\sigma_l$. We have 
$$\mathrm{length}(\sigma_l)\le 2r_l+\delta(r_l)l.$$
Let $R_l=d_H(x,c_l)$. To estimate a lower bound of the length of $c_l$, we consider a cylinder $[0,\infty)\times S^1$ and a map 
$$F:M\to [0,\infty)\times S^1, (v,z)\mapsto (|v|,z)$$
as we did in the proof of Proposition \ref{end_c}. Here we endow the circle factor in the cylinder with a metric of length $\delta(R_l)$. By the same argument as in Proposition \ref{end_c}, it follows that
$$\mathrm{length}(c_l)\ge \mathrm{length}(F(c_l))\ge 2\left(R_l^2+\dfrac{1}{4}\delta(R_l)^2l^2\right)^{1/2}.$$
Then $\mathrm{length}(c_l)\le \mathrm{length}(\sigma_l)$ yields
$$R_l^2+\dfrac{1}{4}\delta(R_l)^2l^2\le r_l^2+2\delta(r_l)r_l l+\dfrac{1}{4}\delta^2(r_l)l^2.$$
Recall that $l\sim \alpha^{-1}\ln^{\alpha+1}(r^2_l)\cdot r_l$ and $\delta(r_l)\sim \ln^{-\alpha}(r_l^2)$ as $l\to\infty$. Thus two sides of the above inequality have
\begin{align*}
\mathrm{RHS}&\sim r_l^2+2\alpha^{-1}\ln(r_l^2)r_l^2+\dfrac{1}{4\alpha^2}\ln^2(r_l^2)r_l^2\sim\alpha^{-2}\ln^2(r_l)r_l^2,\\
\mathrm{LHS}&\sim R_l^2+\dfrac{1}{4\alpha^2}\ln^{-2\alpha}(R_l^2)\ln^{2\alpha+2}(r_l^2)r_l^2.
\end{align*}

We claim that 
$$\limsup_{l\to\infty} \dfrac{R_l}{\ln(r_l)r_l}=0.$$
Suppose that there is a subsequence $l\to\infty$ such that $R_l\sim C\ln(r_l)r_l$ for some $C>0$. Then for this subsequence,
$$\mathrm{LHS}\sim C^2\ln^2(r_l)r_l^2+\dfrac{1}{4\alpha^2}\cdot\dfrac{\ln^{2\alpha+2}(r_l^2)r_l^2}{\ln^{2\alpha}(C^2\ln^2(r_l)r_l^2)}\sim\left(C^2+\alpha^{-2}\right)\ln^2(r_l)r_l^2.$$
This contradicts the inequality $\mathrm{LHS}\le \mathrm{RHS}$ as $l\to\infty$. We have verified the claim.

Consequently, as $l\to\infty$,
$$\dfrac{d_H(x,c_l)}{\mathrm{length}(c_l)}\le \dfrac{R_l}{\delta(R_l)l}\sim \dfrac{R_l\ln^{\alpha}(R_l)}{2\alpha^{-1}\ln^{\alpha+1}(r_l)r_l}\le \dfrac{\epsilon_l \ln(r_l)r_l\cdot \ln^\alpha(r_l^2)}{2\alpha^{-1}\ln^{\alpha+1}(r_l)r_l}\to 0,$$
where $\epsilon_l\to 0$ as $l\to\infty$. This shows that $E(M,x)=0$.

\Addresses	
	
\end{document}